\newcommand{\rmnum}[1]{\romannumeral #1}
\newcommand{\Rmnum}[1]{\expandafter\@slowromancap\romannumeral #1@}
\renewcommand{\labelenumi}{(\roman{enumi})}
\theoremstyle{plain}% default
\newtheorem{theorem}{Theorem}[section]
\newtheorem{lemma}[theorem]{Lemma}
\newtheorem{proposition}[theorem]{Proposition}
\theoremstyle{definition}
\newtheorem{definition}{Definition}[section]
\theoremstyle{remark}
\newtheorem*{remark}{Remark}
\numberwithin{equation}{section}
\DeclareMathOperator{\sgn}{sgn}
\DeclareMathOperator{\supp}{supp}
\DeclareMathOperator{\dom}{dom}
\DeclareMathOperator{\dist}{dist}
\newcommand{\bfn}{\mathbf{n}}
\newcommand{\bfs}{\mathbf{s}}
\newcommand{\bft}{\mathbf{t}}
\newcommand{\bfu}{\mathbf{u}}
\newcommand{\bfv}{\mathbf{v}}
\newcommand{\bfx}{\mathbf{x}}
\newcommand{\bfy}{\mathbf{y}}
\newcommand{\bfz}{\mathbf{z}}
\newcommand{\bfA}{\mathbf{A}}
\newcommand{\bfB}{\mathbf{B}}
\newcommand{\bfI}{\mathbf{I}}
\newcommand{\bbR}{\mathbb{R}}
\newcommand{\calE}{\mathcal{E}}
\newcommand{\calF}{\mathcal{F}}
\newcommand{\sfK}{\mathsf{K}}
\newcommand{\sfM}{\mathsf{M}}
\newcommand{\sfN}{\mathsf{N}}
\newcommand{\ve}{\varepsilon}
\newcommand{\set}[1]{\left\{#1\right\}}
\newcommand{\abs}[1]{\left\vert#1\right\vert}
\newcommand{\norm}[1]{\left\Vert#1\right\Vert}
\renewcommand{\abstract}[1]{\textbf{Abstract.}~#1}
\providecommand{\keywords}[1]{\textbf{Keywords.}~#1}
\providecommand{\subjclass}[2]{\textbf{Mathematics subject classification (2010).}~#1}
\begin{document}

%----------------------------------------------------

\title{\bf An iterative support shrinking algorithm for \\ $\ell_{p}$-$\ell_{q}$ minimization}
\author{Zhifang Liu, Yanan Zhao and Chunlin Wu
\thanks{Corresponding author. wucl@nankai.edu.cn}\\
School of Mathematical Sciences, Nankai University, Tianjin {\rm 300071}, China}

\date{\today}

%----------------------------------------------------
\maketitle
%----------------------------------------------------

\abstract{
We present an iterative support shrinking algorithm for $\ell_{p}$-$\ell_{q}$ minimization~($0 <p < 1 \leq q < \infty $). This algorithm guarantees the nonexpensiveness of the signal support set and can be easily implemented after being proximally linearized. The subproblem can be very efficiently solved due to its convexity and reducing size along iteration. We prove that the iterates of the algorithm globally converge to a stationary point of the $\ell_{p}$-$\ell_{q}$ objective function. In addition, we show a lower bound theory for the iteration sequence, which is more practical than the lower bound results for local minimizers in the literature.
}

\keywords{nonconvex nonsmooth regularization, non-Lipschitz optimization, support shrinking, sparse signal reconstruction, lower bound theory}

\subjclass{49M05,49K30,90C26,94A12,90C30}

%----------------------------------------------------
\section{Introduction}
\label{sec:Intro}

Sparse reconstruction plays an important role in various applications such as signal and image processing, compressed sensing, model selection, variable selection, and many others~\cite{Donoho2006Compressed,Bruckstein2009From,Zou2008One,Lv2009unified,Tropp2010Computational}.
This problem can described as follows. Given an $\sfM  \times \sfN$ measurement matrix $ \bfA$  with $ \sfM < \sfN$, we consider to recover the sparse signal $ \bfx \in \bbR^{\sfN} $ from an observed signal $ \bfy = \bfA \bfx + \bfn \in \bbR^{\sfM}$, where $\bfn$ represents the measurement noise. There are many different types of noise. Two typical and important examples are Gaussian noise and heavy-tailed noise~\cite{Paredes2011Compressive}, which obey Gaussian distribution and heavier-than-Gaussian tails distribution \cite{Huber1981Robust}, respectively. To obtain the sparest solution, one naturally proposes to solve the following $\ell_0$ minimization problem
\[
	\min_{\bfx \in \bbR^{\sfN}} \norm{\bfx}_{0} \text{  subject to  } \norm{\bfA \bfx - \bfy}_{q} \leq \varepsilon,
\]
or its unconstrained counterpart
\begin{equation}\label{eq:l0-mini-prob}
    \min_{\bfx \in \bbR^{\sfN}}  \norm{\bfx}_{0} + \frac{1}{q \alpha}\norm{\bfA \bfx - \bfy}_{q}^{q}    ,
\end{equation}
where $\norm{\cdot}_{0}$ denotes the $\ell_0$ ``norm" that returns the number of nonzero entries of its argument, $\norm{\cdot}_{q}$ for $q \in [1, \infty)$ is the $\ell_{q}$~norm, and $ \alpha \in( 0, \infty)$ is a parameter that balances the regularization and the fidelity. The second term in~\eqref{eq:l0-mini-prob}, named as the fidelity term, is constructed using the noise distribution and the Maximum Likelihood principle. As well known, for Gaussian noise, people use the $\ell_{2}$ fidelity term ($q=2$). For heavy-tailed noise such as impulsive noise, the $\ell_{1}$ fidelity term ($q=1$) is a good choice~\cite{Dielman2005Least,Paredes2011Compressive}. Since the $\ell_0$ minimization is NP-hard~\cite{Nataraj1995Sparse}, numerous methods have been proposed to approximate it. Two common ways are to replace the $\ell_{0}$ ``norm" with the $\ell_1$ norm~\cite{Candes2006Robust,Donoho2006Compressed} and the $\ell_p$ quasi-norm~($0<p<1$) ~\cite{Donoho2006Compressed,Chartrand2008Iteratively,Foucart2009Sparsest,Daubechies2010Iteratively,Sun2012Recovery}, where the $\ell_p$ quasi-norm is defined as
$
    \norm{\bfx}_{p}= (\sum_{j=1}^{\sfN}\abs{x_{j}}^{p})^{1/p}
$.
In sparse reconstruction, the noncovex $\ell_p$ quasi-norm has some advantages~\cite{Chartrand2008Restricted,Foucart2009Sparsest,Sun2012Recovery,Lai2011Unconstrained} over the convex $\ell_1$ norm.

In this paper, we focus on the following $\ell_{p}$-$\ell_{q}$ minimization problem
\begin{equation}\label{eq:lp-mini-prob}
	\min_{\bfx \in \bbR^{\sfN}}
    \calE (\bfx):= \norm{\bfx}_{p}^{p} + \frac{1}{q \alpha} \norm{\bfA \bfx - \bfy}_{q}^{q} ,
\end{equation}
where $p \in(0,1)$, $q \in [1, \infty) $ and $ \alpha \in( 0, \infty)$. The objective function $\calE$ in~\eqref{eq:lp-mini-prob} is nonsmooth, nonconvex and non-Lipschtiz, which results in a great challenge for optimization. We now review some existing methods. As can been seen, most of them were designed for $\ell_{p}$-$\ell_{2}$ minimization.

One class of approaches is smoothing approximate methods~\cite{Chen2010Lower,Chen2012Smoothing,Chen2013Optimality,Bian2013Worst}, which are based on the special structure of the nonsmooth function $\calE$. By a smoothing function $\varphi(x,\theta)$ for the absolute value function $|x|$, the $\ell_{p},p \in(0,1)$ regularization term can be smoothed. Two choices of  $\varphi(x,\theta)$ in~\cite{Chen2012Smoothing,Chen2013Optimality,Bian2013Worst} are
\[
    \varphi_1(x,\theta) =
    \left\{
      \begin{array}{ll}
        |x| & \text{if~} |x| > \theta,  \\
        \frac{x^2}{2\theta} + \frac{\theta}{2}, & \text{if~} |x| \leq \theta,
      \end{array}
    \right.
\text{ and }
    \varphi_2(x,\theta) = \sqrt{x^2 + 4\theta^2}.
\]
Based on this technique, hybrid orthogonal matching pursuit-smoothing gradient (OMP-SG) method \cite{Chen2010Lower},
smoothing quadratic regularization~(SQR) algorithm~\cite{Bian2013Worst}, and smoothing trust region Newton method~\cite{Chen2013Optimality} have been proposed for $\ell_{p},p \in(0,1)$ regularized problems with smooth fidelity terms with convergence guarantee. They essentially reformulate the non-Lipschitz problem to be lipschitz ones by a smoothing parameter, which controls the approximate accuracy and need to be updated progressively to zero.

The second class of approaches is general iterative shrinkage-thresholding algorithms~(GISA) for $\ell_{p}$-$\ell_{2}$ minimization problem~\cite{Xu2012L12,Zuo2013Generalized,Bredies2015Minimization}. GISA was inspired by the great success of soft thresholding and iterative shrinkage-thresholding algorithms (ISTA)~\cite{Daubechies2004iterative,Beck2009fast} for convex $\ell_{1}$-$\ell_{2}$ minimization problem. Specifically, the general step of GISA is
\[
    \bfx^{(k+1)} = \mathcal{T}_{\alpha\beta}(\bfx^{(k)} + \beta \bfA^{T}(\bfy - \bfA \bfx^{(k)})),
\]
where $\beta > 0$ is an appropriate stepsize and $\mathcal{T}_{\alpha\beta}:\bbR^{\sfN}\to \bbR^{\sfN}$ is a shrinkage-thresholding operator. GISA is easy to implement, but it applies only to the case $q = 2$. Even for $q = 2$, the operator $\mathcal{T}_{\alpha\beta}$ have analytical expression only for $p = \frac{1}{2}$ and $p = \frac{2}{3}$~\cite{Xu2012L12,Krishnan2009Fast}. For a general $0<p<1$, the operator $\mathcal{T}_{\alpha\beta}$ needs to be computed via numerical methods~\cite{Zuo2013Generalized,Bredies2015Minimization}.

The third class of approaches is iterative reweighted minimization methods; see, e.g.~\cite{Gorodnitsky1997Sparse,Lai2011Unconstrained,Lai2013Improved,Candes2008Enhancing,Chen2014Convergence}. There are iterative reweighted least squares~(IRLS) and iterative reweighted $\ell_{1}$~(IRL1) minimization methods. One can refer to~\cite{Lu2014Iterative} for a systematic review. In~\cite{Lai2011Unconstrained,Lai2013Improved}, the authors considered a smoothed $\ell_{p}$-$\ell_{2}$ minimization
\[
    \sum_{j = 1}^{\sfN} (x_{j}^2 + \theta^2)^{p/2} + \frac{1}{2\alpha} \norm{\bfA \bfx - \bfy}_{2}^{2}
\]
and proposed IRLS algorithms to solve this approximate problem. In~\cite{Chen2014Convergence}, Chen and Zhou considered the following approximation to $\ell_{p}$-$\ell_{2}$ minimization
\[
    \sum_{j = 1}^{\sfN} (|x_{j}| + \theta)^{p} + \frac{1}{2\alpha} \norm{\bfA \bfx - \bfy}_{2}^{2}
\]
for some small $\theta > 0$. An IRL1 algorithm was proposed to slove this approximate problem. Both IRLS and IRL1 are stable. Actually reweighted methods reformulate the original non-Lipshitz $\ell_{p}$-$\ell_{2}$ to lipschitz ones by a de-singularizing parameter.

In this paper, we consider~\eqref{eq:lp-mini-prob} from a different perspective. We first obtain a proposition from the first order optimality condition. Motivated by this proposition, we propose an iterative algorithm with constraints on the support set of the signals. The core idea is to guarantee that the signal support set will not expand in the iterative procedure. After constraints elimination and proximally linearized, this algorithm can be easily implemented. The subproblem therein is convex and with reducing size along iteration. It is solved inexactly by alternating direction method of multipliers (ADMM). Furthermore, we establish the global convergence of the iterates to a stationary point of~\eqref{eq:lp-mini-prob}. We also prove a new lower bound theory for the iteration sequence, which is more practical than those lower bounds for local minimizers in the literature. Numerical examples show the good performance of our proposed algorithm for both $\ell_{p}$-$\ell_{2}$ and $\ell_{p}$-$\ell_{1}$ restoration.

The rest of this paper is organized as follows. In section~\ref{sec:Pre}, we give some basic notation and preliminaries. In section~\ref{sec:algo}, we describe the motivation, and propose our algorithms.
In section~\ref{sec:con-ana}, the convergence analysis is provided and the lower bound property of the iteration sequence is discussed. In section~\ref{sec:imlpem}, we give implementation details. The numerical experiments are shown in section~\ref{sec:num}. Section~\ref{sec:conclusion} concludes the paper.

%----------------------------------------------------
\section{Some notations and preliminaries}
\label{sec:Pre}

Denote $I = \set{1,2,\dots, \sfM }$ and $J =\set{1,2,\dots,\sfN }$.
For a vector $\bfx \in \bbR^{\sfN}$, we refer to $x_{j}$ as its $j$th entry and denote the support set of $\bfx$ by
\[
\supp(\bfx):=\set{j \in J: x_{j} \neq 0}.
\]
We assume that all vectors are column  vectors.
For a matrix $\bfA \in \bbR^{\sfM \times \sfN}$, we write its $i$th row as $A_{i}^{T}$, which is the vector transpose of $A_{i} \in \bbR^{\sfN}$. Then we have
\[
\bfA =
\left[
\begin{array}{c}
  A_{1}^{T} \\
  \vdots \\
  A_{\sfM}^{T}
\end{array}
\right].
\]

Let $S$ be a subset of $J$.
We denote $\bfx_{S}$ be the subvector of $\bfx$  indexed by $S$, which consists of the nonzero entries of $\bfx$ when $S = \supp(\bfx)$. Similarly, we denote $\bfB = \bfA_{S}$ to be the column submatrix of $\bfA$ consisting of the columns indexed by $S$. Let $B_{i}^{T}$ be the $i$th row of $\bfB$, we have $B_{i} = (A_{i})_{S}$.

Define $\phi: [0,\infty) \to [0,\infty) $ by $\phi(x) = x^{p} (0 < p < 1)$. We state some useful properties for $\phi(\cdot)$.
\begin{proposition}
The function $\phi(\cdot)$ has the following properties:
\begin{enumerate}
\item $\phi(0) = 0$ and $\phi^{\prime}(x) = p x^{p-1}  > 0$ on $(0,\infty)$.
\item  $\phi(x)$ is concave and the following inequality holds,
\begin{equation}\label{eq:first-approx}
	\phi(y) \leq \phi(x) + \phi^{\prime}(x)(y - x),\; \forall x\in (0,\infty), y \in [0,\infty).
\end{equation}
\item For any $c > 0$, $\phi^{\prime}(x)$ is $ L_{c}$-Lipschitz continuous on $[c,\infty)$, i.e., there exists a constant $ L_{c} > 0$ determined by $c$, such that $\forall x,y \in [c, \infty)$,
\begin{equation}\label{eq:grad-Lip-cond}
	\abs{\phi^{\prime}(x) - \phi^{\prime}(y)} \leq L_{c} \abs{x-y}.
\end{equation}
\item The subdifferential of $\phi(\abs{x})$ at $x $ is given by
\[
     \partial \phi(\abs{x}) =
       \begin{cases}
         (-\infty,\infty), &  x = 0,\\
         \set{\sgn(x) \phi^{\prime}(\abs{x})} , &  x \neq 0,\\
       \end{cases}
\]
where $\sgn(x)$ is the signum function.
\end{enumerate}
\end{proposition}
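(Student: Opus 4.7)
The plan is to verify the four items in order, since they are all elementary consequences of calculus applied to $\phi(x)=x^p$ with $0<p<1$.

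For (i) I would differentiate directly and observe that $x^{p-1}>0$ on $(0,\infty)$. For (ii) I would compute $\phi''(x)=p(p-1)x^{p-2}$, note that $0<p<1$ forces $\phi''<0$ on $(0,\infty)$, and conclude strict concavity. The tangent inequality~\eqref{eq:first-approx} for $y\in(0,\infty)$ then follows from concavity via the mean value theorem. The boundary case $y=0$ I would handle by direct substitution: the inequality reduces to $0\leq (1-p)x^p$, which holds; alternatively one can pass to the limit $y\to 0^+$ using continuity of $\phi$.

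For (iii) the key observation is that on $[c,\infty)$, since $p-2<0$, one has $|\phi''(x)|=p(1-p)x^{p-2}\leq p(1-p)c^{p-2}$. Hence the mean value theorem yields~\eqref{eq:grad-Lip-cond} with $L_{c}:=p(1-p)c^{p-2}$.

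For (iv) I would split into $x\neq 0$ and $x=0$. When $x\neq 0$, the function $\phi(\abs{x})=\abs{x}^{p}$ is smooth in a neighbourhood of $x$, so the (Fr\'echet) subdifferential is the singleton given by the chain rule, namely $\set{\sgn(x)\phi'(\abs{x})}$. The only delicate case is $x=0$, where $\abs{x}^{p}$ fails to be Lipschitz. Here I would work directly from the definition
\[
\hat\partial g(0)=\set{v\in\bbR : \liminf_{x\to 0}\frac{g(x)-g(0)-vx}{\abs{x}}\geq 0},
\]
with $g(x)=\abs{x}^{p}$. The quotient equals $\abs{x}^{p-1}-v\sgn(x)$, whose liminf as $x\to 0$ is $+\infty$ for every $v\in\bbR$ because $p-1<0$. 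Thus the subdifferential at $0$ is all of $\bbR$. No step presents a genuine obstacle; the only mildly subtle point is (iv) at $x=0$, which hinges on choosing a subdifferential notion (I read it as the Fr\'echet/limiting one) and exploiting the non-Lipschitz blow-up of $\phi'$ at the origin.
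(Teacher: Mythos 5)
Your proof is correct. The paper states this proposition without any proof, treating all four items as elementary facts, so there is no argument of theirs to compare against; your calculus-based verification (second-derivative sign for concavity, bounding $|\phi''|$ on $[c,\infty)$ for the Lipschitz constant $L_c=p(1-p)c^{p-2}$, and the blow-up of $|x|^{p-1}-v\sgn(x)$ at the origin for the subdifferential) is exactly the standard route one would supply. The only point worth making explicit in item (iv) is that computing the \emph{regular} subdifferential at $0$ to be all of $\bbR$ already pins down the \emph{limiting} subdifferential, since $\widehat{\partial}g(0)\subset\partial g(0)\subset\bbR$; your parenthetical about the choice of subdifferential notion is thus resolved automatically.
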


Using the definition of $\phi(\cdot)$, we have
$
\norm{\bfx}_{p}^{p}=\sum_{j \in J}\phi(\abs{x_{j}}).
$
Thus the objective function $\calE$ in~\eqref{eq:lp-mini-prob} reads
\begin{equation}\label{eq:object-func}
	\calE(\bfx) = \sum_{j \in J}\phi(\abs{x_{j}})  + \frac{1}{q\alpha} \sum_{i \in I}\abs{A_{i}^{T} \bfx - y_{i} }^{q},
\end{equation}
which is bounded below and coercive.

Now, we drive the subdifferential of $\calE$ at $\bfx$.  For $1 < q < \infty$, by~\cite[Exercise 8.8 and Proposition 10.5]{Rockafellar1998Variational}, we get
\begin{equation}\label{eq:obj-func-subdiff-p2}
  \partial \calE(\bfx) = \partial \left(\sum_{j \in J}\phi(\abs{x_{j}})\right)
  +\frac{1}{q\alpha} \nabla \left( \sum_{i \in I}\abs{A_{i}^{T} \bfx - y_{i} }^{q} \right).
\end{equation}
where $\partial (\sum_{j \in J}\phi(\abs{x_{j}})) = \partial \phi(\abs{x_{1}}) \times \dots \times \partial \phi(\abs{x_{\sfN}})$.
For $q = 1$, we have checked the regularity requirement by~\cite[Corollary 10.9]{Rockafellar1998Variational}, indiciating
\begin{equation}\label{eq:obj-func-subdiff-p1}
  \partial \calE(\bfx) = \partial \left(\sum_{j \in J}\phi(\abs{x_{j}})\right)
  + \partial \left(\frac{1}{\alpha} \sum_{i \in I}\abs{A_{i}^{T} \bfx - y_{i} } \right).
\end{equation}

Throughout this paper, we say that  $\bfx^{\ast}$ is a stationary point of~\eqref{eq:lp-mini-prob} if  $\bfx^{\ast}$  satisfies
\begin{equation}\label{eq:first-opt-cond}
       0 \in \partial \calE(\bfx^{\ast}).
\end{equation}
If  $\bar{\bfx}$ is a local minimizer of~\eqref{eq:lp-mini-prob}, then the first-order optimality condition~\eqref{eq:first-opt-cond} holds.

%----------------------------------------------------
\section{Motivation and the proposed algorithm}
\label{sec:algo}

\begin{proposition}\label{lem-motivation}
Given $\bfx \in \bbR^{\sfN}$. Suppose that $\bfx$ is sufficiently close to a local minimizer (or a stationary point) $\bfx^{\ast}$ of~\eqref{eq:lp-mini-prob}.
Then it holds that
   \begin{equation}\label{eq:motivation}
     x_{j}^{\ast} = 0, \; \forall j \in \Omega_{0} = J\setminus \supp(\bfx).
   \end{equation}
\end{proposition}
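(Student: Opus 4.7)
The plan is to derive, from the first-order optimality condition at $\bfx^{\ast}$, a strictly positive universal lower bound on $\abs{x_{j}^{\ast}}$ whenever $x_{j}^{\ast}\neq 0$, and then turn the closeness $\bfx\approx\bfx^{\ast}$ into the desired coincidence of zero coordinates. The key mechanism is that the derivative $\phi^{\prime}(t)=pt^{p-1}$ blows up as $t\downarrow 0$, while the fidelity subdifferential stays bounded; so a stationary point cannot have arbitrarily small nonzero entries.

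First, I would invoke \eqref{eq:first-opt-cond} together with \eqref{eq:obj-func-subdiff-p2} (for $1<q<\infty$) or \eqref{eq:obj-func-subdiff-p1} (for $q=1$). Since the subdifferential of $\sum_{j}\phi(\abs{x_{j}})$ decouples across coordinates, for every index $j$ with $x_{j}^{\ast}\neq 0$ the $j$-th coordinate of $\partial(\sum_{j}\phi(\abs{x_{j}^{\ast}}))$ is the singleton $\{\sgn(x_{j}^{\ast})\,p\abs{x_{j}^{\ast}}^{p-1}\}$. Reading off the $j$-th coordinate of $0\in\partial\calE(\bfx^{\ast})$ then yields an element $g_{j}$ of the $j$-th coordinate of the fidelity subdifferential such that
\[
p\abs{x_{j}^{\ast}}^{p-1}=\abs{g_{j}}.
\]

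Second, I would produce a uniform upper bound $M$ on $\abs{g_{j}}$ depending only on $\bfA,\bfy,q,\alpha$ and the fixed point $\bfx^{\ast}$. For $q>1$ the fidelity term is smooth and its $j$-th partial is $\tfrac{1}{\alpha}\sum_{i\in I}\abs{A_{i}^{T}\bfx^{\ast}-y_{i}}^{q-1}\sgn(A_{i}^{T}\bfx^{\ast}-y_{i})A_{ij}$, whose absolute value is bounded by $\tfrac{1}{\alpha}\sum_{i}\abs{A_{i}^{T}\bfx^{\ast}-y_{i}}^{q-1}\abs{A_{ij}}$. For $q=1$ any selection has the form $\tfrac{1}{\alpha}\sum_{i}s_{i}A_{ij}$ with $\abs{s_{i}}\le 1$ and is bounded by $\tfrac{1}{\alpha}\sum_{i}\abs{A_{ij}}$. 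In both cases $\abs{g_{j}}\le M$, so inverting $p\abs{x_{j}^{\ast}}^{p-1}\le M$ gives the strictly positive lower bound
\[
\abs{x_{j}^{\ast}}\;\ge\;L^{\ast}:=(p/M)^{1/(1-p)}>0
\]
on every nonzero entry of $\bfx^{\ast}$.

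Finally, I would formalize "sufficiently close" as $\norm{\bfx-\bfx^{\ast}}_{\infty}<L^{\ast}$. For any $j\in\Omega_{0}=J\setminus\supp(\bfx)$ one has $x_{j}=0$, hence $\abs{x_{j}^{\ast}}=\abs{x_{j}^{\ast}-x_{j}}<L^{\ast}$, which contradicts the lower bound $\abs{x_{j}^{\ast}}\ge L^{\ast}$ obtained above unless $x_{j}^{\ast}=0$. This yields \eqref{eq:motivation}. The only delicate step is the $q=1$ case, where the fidelity subdifferential is set-valued and one must choose a selection consistent with \eqref{eq:first-opt-cond}; however every such selection is uniformly bounded coordinatewise, so this presents no real obstacle. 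The remainder is a direct unfolding of the coordinate structure of \eqref{eq:obj-func-subdiff-p2} and \eqref{eq:obj-func-subdiff-p1}.
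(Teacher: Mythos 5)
Your proposal is correct and follows essentially the same route as the paper's proof: read off the coordinate of the first-order optimality condition, bound the fidelity subgradient uniformly, and exploit the blow-up of $\phi^{\prime}(t)=pt^{p-1}$ as $t\downarrow 0$ to rule out small nonzero entries of $\bfx^{\ast}$. Your version is in fact slightly sharper, since it quantifies ``sufficiently close'' via the explicit threshold $L^{\ast}=(p/M)^{1/(1-p)}$, whereas the paper argues the same impossibility by contradiction without making the bound explicit.
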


\begin{proof}
We prove~\eqref{eq:motivation} by contradiction. For the case of $1 < q < \infty$.
As $\bfx^{\ast}$ is a local minimizer (or a stationary point) of $\calE$, the condition~\eqref{eq:first-opt-cond} implies that for any $j \in J$, we have
     \[
       0 \in \partial \phi(\abs{x_{j}^{\ast}})
       + \frac{1}{\alpha} \left( \sum_{i \in I} \sgn(A_{i}^{T}\bfx^{\ast} - y_{i})\abs{A_{i}^{T}\bfx^{\ast} - y_{i}}^{q-1} A_{i}\right)_{j}.
     \]
     Assume that there is $j^{\prime} \in \Omega_{0} $ such that $x_{j^{\prime}}^{\ast} \neq 0$.
    Then we have
    \begin{equation}\label{eq:first-opt-cond-component-q2}
        0 = \sgn(x_{j^{\prime}}^{\ast}) \phi^{\prime}(\abs{x_{j^{\prime}}^{\ast}})
       + \frac{1}{\alpha}  \left( \sum_{i \in I}\sgn(A_{i}^{T}\bfx^{\ast} - y_{i}) \abs{A_{i}^{T}\bfx^{\ast} - y_{i}}^{q-1} A_{i}\right)_{j^{\prime}}.
    \end{equation}
The second term on the right side of~\eqref{eq:first-opt-cond-component-q2} is bounded. Since $\bfx$ is sufficiently close to $\bfx^{\ast}$, $x_{j^{\prime}}^{\ast}$ can be sufficiently close to $x_{j^{\prime}} = 0$. Then the equation~\eqref{eq:first-opt-cond-component-q2} is impossible to be true.  This is a contradiction.

For the case of $q = 1$, the condition~\eqref{eq:first-opt-cond} can be rewritten as  for any $j \in J$,
\begin{equation}\label{eq:first-opt-cond-component-q1}
    0 \in \partial \phi(\abs{x_{j}^{\ast}})
       + \frac{1}{\alpha}  \left( \sum_{i \in I} \partial |\cdot|(A_{i}^{T}\bfx^{\ast} - y_{i}) A_{i}\right)_{j}.
\end{equation}
Using the boundedness of the second term on the right side of~\eqref{eq:first-opt-cond-component-q1}, we can prove the results similarly.
\end{proof}

Motivated by Proposition~\ref{lem-motivation}, we propose to solve the problem~\eqref{eq:lp-mini-prob} by an iteration process, which generates a sequence with nonincreasing support set. Suppose that $\bfx^{(k)}$ is an approximate solution in the $k$th iteration. In the next iteration, we minimize the objective function with the restriction of zero entries outside the support set of  $\bfx^{(k)}$. This idea yields the following iterative support shrinking algorithm (ISSA) for solving~\eqref{eq:lp-mini-prob}.

\begin{mdframed}[frametitle = {ISSA: Iterative Support Shrinking Algorithm}, frametitlerule = true]

\noindent{\bf Initialization:} Select $\bfx^{(0)} \in \bbR^{\sfN}$.

\noindent{\bf Iteration:} For $k = 0, 1, \ldots$ until convergence:
\begin{enumerate}
\renewcommand{\labelenumi}{ \arabic{enumi}.}
  \item Set $S^{(k)}  = \supp(\bfx^{(k)})$.
  \item Compute $\bfx^{(k+1)}$ by solving
\begin{equation}\label{eq:constrained-lp-lq-prob}
    \left\{
    \begin{aligned}
      &  \min_{\bfx \in \bbR^{\sfN}} \sum_{ j \in S^{(k)} } \phi( |x_{j}|)
	+ \frac{1}{q\alpha} \| \bfA \bfx - \bfy \|_{q}^{q}, \\
      & \text{s. t. } x_{j} = 0, \; \forall j \in \Omega_{0}^{(k)} = J \setminus  S^{(k)}.
    \end{aligned}
    \right. \tag{$\mathcal{P}_{x}$}
\end{equation}
\end{enumerate}

\end{mdframed}

In fact,  the problem~\eqref{eq:constrained-lp-lq-prob} amounts to minimize the objective function respect to only $\# S^{(k)}$ entries of $\bfx$, with the remaining components being null. Note that $S^{(k)}$ is the support of $\bfx^{(k)}$. Given a vector $\bfx \in \bbR^{\sfN}$  with $\supp(\bfx) \subseteq S^{(k)}$, we let  $ \bfz^{(k)} = \bfx_{S^{(k)}}^{(k)} $, $\bfB^{(k)} =  \bfA_{S^{(k)}} $ and $\bfz = \bfx_{S^{(k)}}$. It follows that
\begin{equation}\label{eq:basic-supp-eq}
    \bfB^{(k)} \bfz = \bfA \bfx,\; \| \bfz - \bfz^{(k)} \|_{2}= \| \bfx - \bfx^{(k)} \|_{2}.
\end{equation}
These relationships help to reformulate the problem~\eqref{eq:constrained-lp-lq-prob} to an unconstrained problem with $\bfz$ as the unknowns. At the same time, each term $\phi( |x_{j}|), j \in S^{(k)}$ can be linearized at $|x_{j}^{(k)}| \neq 0$. Together with a proximal technique, we present an iterative support shrinking algorithm with proximal linearization (ISSAPL) to solve~\eqref{eq:lp-mini-prob}.

\begin{mdframed} [frametitle = {ISSAPL: Iterative Support Shrinking Algorithm with Proximal Linearization}, frametitlerule = true]

\noindent {\bf Initialization:}  Select $\bfx^{(0)}\in \bbR^{\sfN}$ and $\beta > 0$.

\noindent {\bf Iteration:} For $k = 0, 1, \ldots$ until convergence:
\begin{enumerate}
\renewcommand{\labelenumi}{ \arabic{enumi}.}
\renewcommand{\labelenumii}{ \arabic{enumi}.\arabic{enumii}.}
  \item Set $S^{(k)}  = \supp(\bfx^{(k)})$.
  \item Generate $\bfx^{(k+1)}$ as follows:
    \begin{enumerate}
      \item  Set $ \bfz^{(k)} = \bfx_{S^{(k)}}^{(k)} $ and $\bfB^{(k)} =  \bfA_{S^{(k)}} $.
      \item Compute $\hat{\bfz}^{(k+1)}$ by solving
\begin{equation}\label{eq:prob-z}
	 \min_{\bfz } \hat{\calE}^{(k)}(\bfz)=
    \sum_{ j \in S^{(k)} } \phi^{\prime}(|x_{j}^{(k)}|) |z_{j}|	+ \frac{1}{q\alpha} \| \bfB^{(k)} \bfz - \bfy \|_{q}^{q} + \frac{\beta}{2}\| \bfz - \bfz^{(k)} \|_{2}^{2}.
\tag{$\mathcal{P}_{z}$}
\end{equation}
      \item Set
\begin{equation}\label{eq:z-to-x}
  x_{j}^{(k+1)} =
\begin{cases}
	0, & j \in \Omega_{0}^{(k)} = J \setminus S^{(k)},\\
    \hat{z}_{j}^{(k+1)}, & j \in S^{(k)}.
\end{cases}
\end{equation}
    \end{enumerate}
\end{enumerate}

\end{mdframed}

The problem~\eqref{eq:prob-z} in ISSAPL has an unique optimal solution due to strong convexity of $\hat{\calE}^{(k)}$. Although the problem~\eqref{eq:prob-z} is a convex optimization problem, it needs to be solved by iteration. In practical, we solve the ~\eqref{eq:prob-z} inexactly. Now we present our inexact iterative support shrinking algorithm with proximal linearization (InISSAPL) to solve~\eqref{eq:lp-mini-prob}.

\begin{mdframed} [frametitle = {InISSAPL: Inexact Iterative Support Shrinking Algorithm with Proximal Linearization}, frametitlerule = true]

\noindent {\bf Initialization:}  Select $\bfx^{(0)}\in \bbR^{\sfN}$, $\beta > 0$ and $ 0 \leq \ve < 1$.

\noindent {\bf Iteration:} For $k = 0, 1, \ldots$ until convergence:
\begin{enumerate}
\renewcommand{\labelenumi}{ \arabic{enumi}.}
\renewcommand{\labelenumii}{ \arabic{enumi}.\arabic{enumii}.}
  \item Set $S^{(k)}  = \supp(\bfx^{(k)})$.
  \item Generate $\bfx^{(k+1)}$ as follows:
    \begin{enumerate}
      \item  Set $ \bfz^{(k)} = \bfx_{S^{(k)}}^{(k)} $ and $\bfB^{(k)} =  \bfA_{S^{(k)}} $.
      \item Find $	\hat{\bfz}^{(k+1)} \approx \arg\min_{\bfz } \hat{\calE}^{(k)}(\bfz)$ and     $\hat{\bfu}^{(k+1)} \in \partial \hat{\calE}(\hat{\bfz}^{(k+1)})$, such that
\begin{equation}\label{eq:sub-opt-cond}
  \| \hat{\bfu}^{(k+1)} \|_{2} \leq  \frac{\beta }{2} \ve  \| \hat{\bfz}^{(k+1)} - \bfz^{(k)}\|_{2}.
\end{equation}
      \item Set
\begin{equation}\label{eq:z-to-x-in}
  x_{j}^{(k+1)} =
\begin{cases}
	0, & j \in \Omega_{0}^{(k)} = J \setminus S^{(k)},\\
    \hat{z}_{j}^{(k+1)}, & j \in S^{(k)}.
\end{cases}
\end{equation}
    \end{enumerate}
\end{enumerate}

\end{mdframed}

\begin{remark}
The condition~\eqref{eq:sub-opt-cond} in InISSAPL is motivated by~\cite{Attouch2013Convergence}. It corresponds to an inexact optimality condition and a guide to select the approximate solution for \eqref{eq:prob-z}. Due to the strong convexity of the problem \eqref{eq:prob-z}, it can be solved to any given accuracy. Therefore, the condition~\eqref{eq:sub-opt-cond} in InISSAPL can hold, as long as the problem \eqref{eq:prob-z} is sufficiently solved.
\end{remark}

We have some useful representations of $\hat{\bfu}^{(k+1)}$. Since $\hat{\bfu}^{(k+1)} \in \partial \hat{\calE}(\hat{\bfz}^{(k+1)}) $, we have
\begin{equation}\label{eq:prob-z-in-opt-cond}
\begin{split}
   \hat{\bfu}^{(k+1)} & \in \partial \left( \sum_{ j \in S^{(k)} } \phi^{\prime}(|z_{j}^{(k)}|) |\hat{z}_{j}^{(k+1)}| \right)
   + \beta(\hat{\bfz}^{(k+1)} - \bfz^{(k)} ) \\
    &\phantom{=;} + \frac{1}{q\alpha} \partial \left(\sum_{i \in I} \abs{(B_{i}^{(k)})^{T} \hat{\bfz}^{(k+1)} - y_{i} }^{q}  \right).
\end{split}
\end{equation}
Then for any $j \in S^{(k)}$ and $i \in I$, there are $\xi_{j} \in \partial |\cdot|(\hat{z}_{j}^{(k+1)}) = \partial |\cdot|(x_{j}^{(k+1)})$ and
$\eta_{i} =\partial |\cdot|((B_{i}^{(k)})^{T} \hat{\bfz}^{(k+1)} - y_{i} )  = \partial |\cdot|(A_{i}^{T} \bfx^{(k+1)} - y_{i} )$,
such that when $1 < q < \infty$,
\begin{equation}\label{eq:prob-z-sub-diff-q2}
\begin{split}
    \hat{u}_{j}^{(k+1)}  & = \xi_{j} \phi^{\prime}(|z_{j}^{(k)}|)  +\beta(\hat{z}_{j}^{(k+1)} - z_{j}^{(k)})\\
    & \phantom{=;} + \frac{1}{\alpha}\left(\sum_{i\in I} \sgn((B_{i}^{(k)})^{T} \hat{\bfz}^{(k+1)} - y_{i} ) \abs{(B_{i}^{(k)})^{T} \hat{\bfz}^{(k+1)} - y_{i}}^{q-1} B_{i}^{(k)}\right)_{j} \\
     & = \xi_{j} \phi^{\prime}(|x_{j}^{(k)}|)  +\beta(x_{j}^{(k+1)} - x_{j}^{(k)})  \\
& \phantom{=;} + \frac{1}{\alpha}\left(\sum_{i\in I} \sgn(A_{i}^{T} \bfx^{(k+1)} - y_{i} ) \abs{A_{i}^{T} \bfx^{(k+1)} - y_{i}}^{q-1} A_{i}\right)_{j},
\end{split}
\end{equation}
and when $q=1$,
\begin{equation}\label{eq:prob-z-sub-diff-q1}
\begin{split}
   \hat{u}_{j}^{(k+1)}
    & = \xi_{j} \phi^{\prime}(|z_{j}^{(k)}|)   +\beta(\hat{z}_{j}^{(k+1)} - z_{j}^{(k)}) + \frac{1}{\alpha}\left(\sum_{i\in I} \eta_{i} B_{i}^{(k)}\right)_{j}\\
     & = \xi_{j} \phi^{\prime}(|x_{j}^{(k)}|)  +\beta(x_{j}^{(k+1)} - x_{j}^{(k)}) + \frac{1}{\alpha}\left(\sum_{i\in I} \eta_{i} A_{i}\right)_{j}.
\end{split}
\end{equation}

%----------------------------------------------------
\section{Convergence analysis}
\label{sec:con-ana}

In this section, we establish the global convergence result of the sequence by the proposed InISSAPL. These results also hold for ISSAPL.

From the iteration process of InISSAPL, we can see that it generates a nonincreasing sequence of support set. A basic lemma for $\set{S^{(k)}}$ is showed in the following.

\begin{lemma}\label{lem-finite-num-iter}
The sequence $\set{S^{(k)}}$ converges in a finite number of iterations, i.e., there exists an integer $ \sfK > 0$ such that if $k \geq \sfK$, then $S^{(k)} \equiv  S^{(\sfK)}$.
\end{lemma}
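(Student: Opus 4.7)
The plan is to exploit the fact that the algorithm's update rule forces $\text{supp}(\bfx^{(k+1)}) \subseteq S^{(k)}$, so that $\{S^{(k)}\}$ is a monotone decreasing sequence of subsets of the finite index set $J$, and then invoke a finiteness argument.

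First I would verify the monotonicity. By step 2(c) of InISSAPL, formula \eqref{eq:z-to-x-in} sets $x_{j}^{(k+1)} = 0$ for every $j \in \Omega_{0}^{(k)} = J \setminus S^{(k)}$. Consequently
\[
S^{(k+1)} = \supp(\bfx^{(k+1)}) \subseteq S^{(k)}, \qquad \forall k \geq 0.
\]
In particular the cardinality sequence $\{\#S^{(k)}\}$ is a nonincreasing sequence of nonnegative integers bounded above by $\sfN$.

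Next, since $\{\#S^{(k)}\}$ is a nonincreasing sequence in $\{0,1,\dots,\sfN\}$, it must stabilize: there exists an integer $\sfK \geq 0$ such that $\#S^{(k)} = \#S^{(\sfK)}$ for every $k \geq \sfK$. Combined with the inclusion $S^{(k+1)} \subseteq S^{(k)}$, equality of cardinalities forces equality of sets, so $S^{(k)} \equiv S^{(\sfK)}$ for every $k \geq \sfK$, which is the claim.

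I do not anticipate any real obstacle here: the only subtlety is recognizing that the monotonicity $S^{(k+1)} \subseteq S^{(k)}$ is not an assertion about the optimization subproblem \eqref{eq:prob-z} but a direct consequence of the zero-padding rule \eqref{eq:z-to-x-in} used to embed $\hat{\bfz}^{(k+1)}$ back into $\bbR^{\sfN}$. Once that is noted, finiteness of $J$ closes the argument.
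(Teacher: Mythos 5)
Your proposal is correct and follows essentially the same route as the paper: the paper simply notes the nested chain $J \supseteq S^{(0)} \supseteq S^{(1)} \supseteq \cdots$ and concludes stabilization from finiteness of $J$. You merely make explicit two details the paper leaves implicit (that the inclusion comes from the zero-padding rule \eqref{eq:z-to-x-in}, and that a nonincreasing integer cardinality sequence must stabilize), which is a welcome but not substantively different elaboration.
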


\begin{proof}
Since
\[
  J \supseteq S^{(0)} \supseteq \cdots \supseteq S^{(k)} \supseteq \cdots,
\]
 $\set{S^{(k)}}$ converges in a finite number of iterations.
\end{proof}

Lemma~\ref{lem-finite-num-iter} plays a key role in the convergence analysis for the three vector sequences, $\set{\bfx^{(k)}}$, $\set{\bfz^{(k)}}$ and $\set{\hat{\bfz}^{(k)}}$, generated by InISSAPL. Note that $\bfz^{(k)} = \bfx_{S^{(k)}}^{(k)}$. By~\eqref{eq:z-to-x-in}, $\hat{\bfz}^{(k)}$ and $\bfx^{(k)}$ have exactly the same nonzero entries. From Lemma~\ref{lem-finite-num-iter}, we can claim that after a certain $\sfK$ numbers of iteration, the support of $\bfx^{(k)}$ is fixed, i.e., if $k \geq \sfK$, then $\supp(\bfx^{(k)}) = S^{(\sfK)}$, from which we can directly obtain that
\[
	\bfz^{(k)} = \bfx_{S^{(\sfK)}}^{(k)} = \hat{\bfz}^{(k)}, \forall k > \sfK.
\]

In the next, we establish the global convergence of the sequence $\set{\bfx^{(k)}}$.
For the convenience of description, we introduce an auxiliary function
\begin{equation}\label{eq:prox-linear-approx-obj}
\begin{split}
   \calF^{(k)}(\bfx) & =  \sum_{ j \in S^{(k)} } \phi(|x_{j}^{(k)}|)
	+ \phi^{\prime}(|x_{j}^{(k)}|) \left(|x_{j}| - |x_{j}^{(k)}|\right)
	+ \frac{1}{q\alpha} \| \bfA \bfx - \bfy \|_{q}^{q} \\
     & \phantom{=;} + \frac{\beta}{2}\| \bfx - \bfx^{(k)} \|_{2}^{2}.
\end{split}	
\end{equation}

\begin{lemma}\label{lem-bound-sufficient-decrease}
For any $\beta > 0$ and $0 \leq \ve < 1$, let $\set{\bfx^{(k)}}$ be a sequence generated by {\rm InISSAPL}. Then
\begin{enumerate}
\item The sequence $\set{\calE(\bfx^{(k)})}$ is nonincreasing and satisfies
    \begin{equation}\label{eq:sufficient-decrease-condition}
    \calE(\bfx^{(k+1)})+ \frac{\beta}{2}(1-\ve)\| \bfx^{(k+1)} - \bfx^{(k)} \|_{2}^{2} \leq \calE(\bfx^{(k)}).
    \end{equation}
\item The sequence $\set{\bfx^{(k)}}$  is bounded and satisfies $\lim_{k \to \infty} \| \bfx^{(k+1)} - \bfx^{(k)} \|_{2} = 0$.
\end{enumerate}
\end{lemma}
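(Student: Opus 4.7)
The plan is to prove (i) first by a telescoping sandwich argument that passes between $\calE$ and the subproblem objective $\hat{\calE}^{(k)}$ via the auxiliary function $\calF^{(k)}$ defined in~\eqref{eq:prox-linear-approx-obj}, and then obtain (ii) as an immediate consequence of the coercivity of $\calE$ together with the telescoping sum.

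For the sufficient decrease~\eqref{eq:sufficient-decrease-condition}, I would proceed in three steps. First, I would apply the concavity inequality~\eqref{eq:first-approx} termwise: for $j \in S^{(k)}$ we have $|x_j^{(k)}| > 0$, which legitimates the use of~\eqref{eq:first-approx} with $x = |x_j^{(k)}|$ and $y = |x_j^{(k+1)}|$; for $j \notin S^{(k)}$, formula~\eqref{eq:z-to-x-in} forces $x_j^{(k+1)} = 0 = x_j^{(k)}$, so both $\phi$-terms vanish. Summing and adding the fidelity, this produces the one-line bound
\[
   \calE(\bfx^{(k+1)}) \leq \calF^{(k)}(\bfx^{(k+1)}) - \frac{\beta}{2}\|\bfx^{(k+1)} - \bfx^{(k)}\|_2^2.
\]
Second, using the basic support identities~\eqref{eq:basic-supp-eq} (which apply because $\supp(\bfx^{(k+1)}) \subseteq S^{(k)}$), I would rewrite both $\calF^{(k)}(\bfx^{(k+1)})$ and $\calE(\bfx^{(k)})$ in terms of the subproblem objective, obtaining
\[
   \calF^{(k)}(\bfx^{(k+1)}) = \hat{\calE}^{(k)}(\hat{\bfz}^{(k+1)}) + C^{(k)}, \qquad
   \calE(\bfx^{(k)}) = \hat{\calE}^{(k)}(\bfz^{(k)}) + C^{(k)},
\]
where $C^{(k)} = \sum_{j \in S^{(k)}}\bigl[\phi(|x_j^{(k)}|) - \phi^{\prime}(|x_j^{(k)}|)|x_j^{(k)}|\bigr]$ is a constant common to both identities and will cancel.

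The crux, and the main obstacle, is the third step: quantifying how well $\hat{\bfz}^{(k+1)}$ decreases $\hat{\calE}^{(k)}$ from the anchor $\bfz^{(k)}$ despite being only an \emph{inexact} minimizer. Because the proximal term $\frac{\beta}{2}\|\bfz - \bfz^{(k)}\|_2^2$ makes $\hat{\calE}^{(k)}$ at least $\beta$-strongly convex while the other summands remain convex, the strong-convexity inequality applied with any $\hat{\bfu}^{(k+1)} \in \partial\hat{\calE}^{(k)}(\hat{\bfz}^{(k+1)})$ gives
\[
   \hat{\calE}^{(k)}(\bfz^{(k)}) \geq \hat{\calE}^{(k)}(\hat{\bfz}^{(k+1)})
   + \langle \hat{\bfu}^{(k+1)},\, \bfz^{(k)} - \hat{\bfz}^{(k+1)}\rangle
   + \frac{\beta}{2}\|\bfz^{(k)} - \hat{\bfz}^{(k+1)}\|_2^2.
\]
Bounding the inner product below by Cauchy--Schwarz and then invoking the inexact optimality condition~\eqref{eq:sub-opt-cond} absorbs a $\frac{\beta\ve}{2}\|\hat{\bfz}^{(k+1)} - \bfz^{(k)}\|_2^2$ penalty, leaving
\[
   \hat{\calE}^{(k)}(\bfz^{(k)}) - \hat{\calE}^{(k)}(\hat{\bfz}^{(k+1)})
   \geq \frac{\beta(1-\ve)}{2}\|\hat{\bfz}^{(k+1)} - \bfz^{(k)}\|_2^2
   = \frac{\beta(1-\ve)}{2}\|\bfx^{(k+1)} - \bfx^{(k)}\|_2^2,
\]
where the last equality again uses~\eqref{eq:basic-supp-eq}. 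Chaining the three steps and cancelling $C^{(k)}$ yields~\eqref{eq:sufficient-decrease-condition}.

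For (ii), the coercivity and lower-boundedness of $\calE$ noted after~\eqref{eq:object-func} combined with the monotonicity $\calE(\bfx^{(k)}) \leq \calE(\bfx^{(0)})$ from part (i) confine the whole sequence $\{\bfx^{(k)}\}$ to a bounded sublevel set, giving boundedness. Telescoping~\eqref{eq:sufficient-decrease-condition} over $k$ and using that $\calE(\bfx^{(k)})$ is nonincreasing and bounded below (hence convergent) gives $\sum_{k=0}^{\infty}\|\bfx^{(k+1)} - \bfx^{(k)}\|_2^2 < \infty$, from which $\|\bfx^{(k+1)} - \bfx^{(k)}\|_2 \to 0$ follows immediately.
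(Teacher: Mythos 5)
Your proposal is correct and follows the same overall strategy as the paper: both majorize $\calE$ by the auxiliary function $\calF^{(k)}$ via the concavity inequality~\eqref{eq:first-approx}, and both control the inexactness by Cauchy--Schwarz combined with~\eqref{eq:sub-opt-cond}. The one genuine difference is the descent step. The paper applies the plain subgradient inequality to the convex function $\calF^{(k)}$ on all of $\bbR^{\sfN}$, after extending $\hat{\bfu}^{(k+1)}$ to a full subgradient $\bfu^{(k+1)} \in \partial \calF^{(k)}(\bfx^{(k+1)})$, and thereby discards the curvature contributed by the proximal term; you instead stay in the reduced $\bfz$-variables and invoke the $\beta$-strong convexity of $\hat{\calE}^{(k)}$. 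Your route retains an extra $\frac{\beta}{2}\|\bfx^{(k+1)} - \bfx^{(k)}\|_2^2$ and hence actually proves the sharper estimate $\calE(\bfx^{(k+1)}) + \frac{\beta}{2}(2-\ve)\|\bfx^{(k+1)} - \bfx^{(k)}\|_2^2 \leq \calE(\bfx^{(k)})$, which of course implies~\eqref{eq:sufficient-decrease-condition}, whereas the paper's weaker use of convexity yields exactly the stated constant. Your bookkeeping identities $\calF^{(k)}(\bfx^{(k+1)}) = \hat{\calE}^{(k)}(\hat{\bfz}^{(k+1)}) + C^{(k)}$ and $\calE(\bfx^{(k)}) = \hat{\calE}^{(k)}(\bfz^{(k)}) + C^{(k)}$ are valid by~\eqref{eq:basic-supp-eq}, and part (ii) is handled identically in both arguments.
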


\begin{proof}
For the case of $1 < q < \infty$. Due to the fact that $\phi(0) = 0$, we have
\begin{equation}\label{eq:mm-eq1}
\begin{split}
	\calF^{(k)}(\bfx^{(k)}) & =\sum_{ j \in S^{(k)} } \phi(|x_{j}^{(k)}|) + \frac{1}{q\alpha} \| \bfA \bfx^{(k)} - \bfy \|_{q}^{q}  \\
	& =\sum_{ j \in J } \phi(|x_{j}^{(k)}|) + \frac{1}{q\alpha} \| \bfA \bfx^{(k)} - \bfy \|_{q}^{q} = \calE(\bfx^{(k)}).
\end{split}
\end{equation}
When $\bfx \in \bbR^{\sfN}$ and $\supp(\bfx) \subseteq S^{(k)}$, we obtain
\begin{equation}\label{eq:mm-eq2}
\begin{split}
	\calF^{(k)}(\bfx) &= \sum_{ j \in \supp(\bfx) } \phi(|x_{j}^{(k)}|)
	+ \phi^{\prime}(|x_{j}^{(k)}|) \left(|x_{j}| - |x_{j}^{(k)}|\right) \\
    &\phantom{=;} + \sum_{ j \in S^{(k)}\setminus \supp(\bfx) } \phi(|x_{j}^{(k)}|)
	- \phi^{\prime}(|x_{j}^{(k)}|) |x_{j}^{(k)}| \\
    &\phantom{=;} + \frac{1}{q\alpha} \| \bfA \bfx - \bfy \|_{q}^{q}
	+ \frac{\beta}{2}\| \bfx - \bfx^{(k)} \|_{2}^{2}
\\
[~\text{by~\eqref{eq:first-approx}}~]~  &\geq  \sum_{ j \in S^{(k)} } \phi(|x_{j}|) 	+ \frac{1}{q\alpha} \| \bfA \bfx - \bfy \|_{q}^{q} + \frac{\beta}{2}\| \bfx - \bfx^{(k)} \|_{2}^{2} \\
	&=  \sum_{ j \in J } \phi(|x_{j}|) 	+ \frac{1}{q\alpha} \| \bfA \bfx - \bfy \|_{q}^{q}
	+ \frac{\beta}{2}\| \bfx - \bfx^{(k)} \|_{2}^{2} \\
	&= \calE(\bfx)+ \frac{\beta}{2}\| \bfx - \bfx^{(k)} \|_{2}^{2}.
\end{split}
\end{equation}

The subdifferential of $\calF^{(k)}$ at $\bfx$ is defined as
\[
\begin{split}
	\partial \calF^{(k)}(\bfx) &= \partial \left(\sum_{ j \in S^{(k)} } \phi^{\prime}(|x_{j}^{(k)}|) |x_{j}| \right)
+ \beta( \bfx - \bfx^{(k)})\\
	& \phantom{=;} +\frac{1}{q\alpha} \nabla \left( \sum_{i \in I}\abs{A_{i}^{T} \bfx - y_{i} }^{q} \right) \\
	& = \partial \left( \sum_{ j \in S^{(k)} }  \phi^{\prime}(|x_{j}^{(k)}|) |x_{j}| \right)
	+ \beta( \bfx - \bfx^{(k)}) \\
    & \phantom{=;} + \frac{1}{\alpha} \sum_{i\in I} \sgn(A_{i}^{T} \bfx- y_{i} ) \abs{A_{i}^{T} \bfx - y_{i}}^{q-1} A_{i}.
\end{split}
\]
Let $\bfu^{(k+1)} = (u_{1}^{(k+1)}, \ldots, u_{\sfN}^{(k+1)})^{T} $, where
\[
	u_{j}^{(k+1)} =
\begin{cases}
	\hat{u}_{j}^{(k+1)}, & j \in S^{(k)} \\
	\frac{1}{\alpha}\left(\sum_{i\in I} \sgn(A_{i}^{T} \bfx^{(k+1)} - y_{i} ) \abs{A_{i}^{T} \bfx^{(k+1)} - y_{i}}^{q-1} A_{i}\right)_{j}, & \text{otherwise},
\end{cases}	
\]
with $\hat{u}_{j}^{(k+1)}$ in \eqref{eq:prob-z-sub-diff-q2}.
Then $\bfu^{(k+1)} \in \partial \calF^{(k)}(\bfx^{(k+1)}) $. Since for any $ j \in J \setminus  S^{(k)}$, $x_{j}^{(k+1)} = x_{j}^{(k)} = 0$, we have
\begin{equation}\label{eq:mm-eq3}
\begin{split}
	\langle \bfu^{(k+1)} , \bfx^{(k)} - \bfx^{(k+1)} \rangle
	& = \sum_{j \in S^{(k)} } u_{j}^{(k+1)}(x_{j}^{(k)} - x_{j}^{(k+1)})\\
	 & = \sum_{j \in S^{(k)}} \hat{u}_{j}^{(k+1)}(z_{j}^{(k)} - \hat{z}_{j}^{(k+1)}) \\
	&\geq - \|\hat{\bfu}^{(k+1)} \|_{2} \| \bfz^{(k)} - \hat{\bfz}^{(k+1)} \|_{2}\\
[~\text{by~\eqref{eq:sub-opt-cond}}~]~	&\geq - \frac{\beta}{2}\ve \| \bfz^{(k)} - \hat{\bfz}^{(k+1)} \|_{2}^{2}\\
[~\text{by~\eqref{eq:basic-supp-eq}}~]~    &= - \frac{\beta}{2}\ve \| \bfx^{(k)} - \bfx^{(k+1)} \|_{2}.
\end{split}
\end{equation}

Putting~\eqref{eq:mm-eq1}, \eqref{eq:mm-eq2} and~\eqref{eq:mm-eq3} together, we obtain
\[
\begin{split}
	\calE(\bfx^{(k)}) = \calF^{(k)}(\bfx^{(k)}) & \geq \calF^{(k)}(\bfx^{(k+1)}) + \langle \bfu^{(k+1)} , \bfx^{(k)} - \bfx^{(k+1)} \rangle \\
	& \geq \calF^{(k)}(\bfx^{(k+1)}) - \frac{\beta}{2}\ve \| \bfx^{(k+1)} - \bfx^{(k)} \|_{2}\\
	& \geq \calE(\bfx^{(k+1)})+ \frac{\beta}{2}(1-\ve)\| \bfx^{(k+1)} - \bfx^{(k)} \|_{2}^{2}.
\end{split}	
\]

With the fact that $\calE(\bfx)$ is bounded from below and $ \frac{\beta}{2}(1-\ve) > 0$, it follows that $\set{\calE(\bfx^{(k)})}$ is nonincreasing and converges to a finite value as $k \to \infty$. Thus $\lim_{k\to \infty} \| \bfx^{(k+1)} - \bfx^{(k)} \|_{2} = 0$.

Because $\calE(\bfx)$ is coercive, we know that $\set{\bfx^{(k)}}$ is bounded.

For the case of $q =1$, the subdifferential of $\calF^{(k)}$ at $\bfx$ is given by
\[
\begin{split}
	\partial \calF^{(k)}(\bfx) &= \partial \left(\sum_{ j \in S^{(k)} } \phi^{\prime}(|x_{j}^{(k)}|) |x_{j}| \right)
    + \beta( \bfx - \bfx^{(k)})
	+ \partial \left(\frac{1}{\alpha} \sum_{i \in I}\abs{A_{i}^{T} \bfx - y_{i} } \right)  \\
	& = \partial \left( \sum_{ j \in S^{(k)} }  \phi^{\prime}(|x_{j}^{(k)}|) |x_{j}| \right)
    + \beta( \bfx - \bfx^{(k)})
	+ \frac{1}{\alpha} \sum_{i\in I} \partial |\cdot|(A_{i}^{T} \bfx- y_{i} ) A_{i}.
\end{split}
\]
Let $\bfu^{(k+1)} = (u_{1}^{(k+1)}, \ldots, u_{\sfN}^{(k+1)})^{T} $ with
\[
	u_{j}^{(k+1)} =
\begin{cases}
	\hat{u}_{j}^{(k+1)}, & j \in S^{(k)} \\
	\frac{1}{\alpha}\left(\sum_{i\in I} \eta_{i} A_{i} \right)_{j},  & \text{otherwise},
\end{cases}	
\]
where $\hat{u}_{j}^{(k+1)}$ is as in \eqref{eq:prob-z-sub-diff-q1} and $\eta_{i} \in \partial |\cdot|(A_{i}^{T} \bfx- y_{i} )$.
Then $\bfu^{(k+1)} \in \partial \calF^{(k)}(\bfx^{(k+1)}) $.
In a similar way, we can prove that (\rmnum{1})(\rmnum{2}) holds.
\end{proof}

Recall the results of Lemma~\ref{lem-finite-num-iter}, we now focus on the iteration number $k \geq K$ to get the convergence of the sequence $\set{\bfx^{(k)}}$.
Then the entries of $\hat{\bfu}^{(k+1)}$ in \eqref{eq:prob-z-in-opt-cond} can be written as for any $ j \in S^{(k)}$, when $1<q<\infty$,
\begin{equation}\label{eq:prob-z-sub-diff-q2-K}
\begin{split}
    \hat{u}_{j}^{(k+1)}
     & = \sgn(x_{j}^{(k+1)}) \phi^{\prime}(|x_{j}^{(k)}|)   + \beta(x_{j}^{(k+1)} - x_{j}^{(k)})  \\
& \phantom{=;} + \frac{1}{\alpha}\left(\sum_{i\in I} \sgn(A_{i}^{T} \bfx^{(k+1)} - y_{i} ) \abs{A_{i}^{T} \bfx^{(k+1)} - y_{i}}^{q-1} A_{i}\right)_{j},
\end{split}
\end{equation}
and when $q=1$,
\begin{equation}\label{eq:prob-z-sub-diff-q1-K}
	\hat{u}_{j}^{(k+1)} =  \sgn(x_{j}^{(k+1)}) \phi^{\prime}(|x_{j}^{(k)}|)   + \beta(x_{j}^{(k+1)} - x_{j}^{(k)}) + \frac{1}{\alpha}\left(\sum_{i\in I} \eta_{i} A_{i}\right)_{j}.
\end{equation}
The condition~\eqref{eq:sub-opt-cond} reads
\begin{equation}\label{eq:inexact-opt-cond}
	\| \hat{\bfu}^{(k+1)} \|_{2} \leq \frac{\beta}{2} \ve\| \hat{\bfz}^{(k+1)} - \bfz^{(k)} \|_{2}
	=\frac{\beta}{2} \ve \| \bfx^{(k+1)} - \bfx^{(k)} \|_{2},
\end{equation}
for $1 \leq q < \infty$.

The following is a bound theory on the iteration sequence, which is important to establish the convergence of  $\set{\bfx^{(k)}}$.
\begin{theorem}\label{lem-bound-theory}
There are $ 0 < c < C < \infty$ such that
\begin{equation}\label{eq:bound-theory}
	either \quad x_{j}^{(k)} = 0  \quad or \quad c \leq |x_{j}^{(k)}| \leq C, \; \forall j \in J, \; \forall k \geq \sfK.
\end{equation}
\end{theorem}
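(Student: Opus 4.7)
The plan is to split the claim into an upper bound and a lower bound, handling the trivial zero case by invoking Lemma~\ref{lem-finite-num-iter}. For indices $j \notin S^{(\sfK)}$, the definition of the scheme forces $x_j^{(k)} = 0$ for every $k \geq \sfK$, so these fall under the first alternative. For $j \in S^{(\sfK)}$ we have $x_j^{(k)} \neq 0$ for all $k \geq \sfK$, and we must produce uniform constants $0 < c \leq C$ that sandwich $|x_j^{(k)}|$.

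The upper bound $C$ is immediate from part (ii) of Lemma~\ref{lem-bound-sufficient-decrease}: the sequence $\{\bfx^{(k)}\}$ is bounded, so taking $C$ to be a componentwise bound of $\{\bfx^{(k)}\}_{k \geq \sfK}$ does the job. The bulk of the work is the lower bound. The plan is to isolate $\phi'(|x_j^{(k)}|) = p\,|x_j^{(k)}|^{p-1}$ from the inexact optimality relations \eqref{eq:prob-z-sub-diff-q2-K}--\eqref{eq:prob-z-sub-diff-q1-K}, valid for $k \geq \sfK$ and $j \in S^{(\sfK)}$. Since $j$ is in the stabilized support, $x_j^{(k+1)} \neq 0$ and $\sgn(x_j^{(k+1)}) \in \{-1,+1\}$, so taking absolute values yields
\[
    p\,|x_j^{(k)}|^{p-1} \;\leq\; |\hat{u}_j^{(k+1)}| \;+\; \beta\,|x_j^{(k+1)} - x_j^{(k)}| \;+\; \tfrac{1}{\alpha}\,\bigl|\,R_j^{(k+1)}\,\bigr|,
\]
where $R_j^{(k+1)}$ is the $j$th component of the fidelity term's (sub)gradient appearing in \eqref{eq:prob-z-sub-diff-q2-K} or \eqref{eq:prob-z-sub-diff-q1-K}.

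Next I would show that the right-hand side is bounded uniformly in $k$. The inexact condition \eqref{eq:inexact-opt-cond} together with the boundedness of $\{\|\bfx^{(k+1)} - \bfx^{(k)}\|_2\}$ (which even tends to zero, by Lemma~\ref{lem-bound-sufficient-decrease}(ii)) bounds the first two terms. For the fidelity contribution $R_j^{(k+1)}$: when $1 < q < \infty$ it is a sum over $i$ of $\sgn(A_i^T\bfx^{(k+1)} - y_i)\,|A_i^T\bfx^{(k+1)} - y_i|^{q-1}(A_i)_j$, and the boundedness of $\{\bfx^{(k+1)}\}$ makes each $|A_i^T\bfx^{(k+1)} - y_i|^{q-1}$ uniformly bounded; when $q=1$ the subgradient entries $\eta_i$ lie in $[-1,1]$, so $|R_j^{(k+1)}| \leq \sum_{i}|(A_i)_j|$. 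In either case we obtain a constant $M>0$, independent of $k$, with $p\,|x_j^{(k)}|^{p-1} \leq M$, and since $p-1<0$ this inverts to
\[
    |x_j^{(k)}| \;\geq\; \Bigl(\tfrac{p}{M}\Bigr)^{1/(1-p)} \;=:\; c \;>\; 0.
\]

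The main obstacle is the clean separation of the fidelity term from $\phi'(|x_j^{(k)}|)$: one must be careful that the boundedness of $\{\bfx^{(k)}\}$ really does uniformize $|A_i^T \bfx^{(k+1)} - y_i|^{q-1}$ when $q$ is close to $1$, and that the $q=1$ subgradient choice $\eta_i$ (not uniquely determined) is absorbed correctly through the bound $|\eta_i|\le 1$. Once these two cases are treated in parallel using \eqref{eq:prob-z-sub-diff-q2-K} and \eqref{eq:prob-z-sub-diff-q1-K}, the same constant $c$ works for all $k\geq \sfK$ and all $j\in S^{(\sfK)}$, completing the proof.
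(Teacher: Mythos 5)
Your proposal is correct and follows essentially the same route as the paper: both extract $\phi'(|x_j^{(k)}|)$ from the stabilized optimality relations \eqref{eq:prob-z-sub-diff-q2-K}--\eqref{eq:prob-z-sub-diff-q1-K}, bound the remaining terms uniformly via the boundedness of $\{\bfx^{(k)}\}$ and the inexact condition \eqref{eq:inexact-opt-cond}, and exploit the blow-up of $\phi'$ at zero. The only cosmetic difference is that the paper phrases the lower bound as a contradiction with a subsequence tending to zero, whereas you invert the inequality directly and obtain the explicit constant $c = (p/M)^{1/(1-p)}$.
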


\begin{proof}
From Lemma~\ref{lem-finite-num-iter}, for any $j \in S^{(\sfK)}$ and $k \geq \sfK$, $x_{j}^{(k)} \neq 0$. We now prove by contradiction that $|x_{j}^{(k)}| $ has nonzero lower and upper bound for any $j \in S^{(\sfK)}, \forall k \geq \sfK$.

For the case of $1 < q < \infty$, assume there exists $j^{\prime} \in S^{(\sfK)}$ such that
$
  x_{j^{\prime}}^{(k)} \neq 0 \text{ and } \lim_{k\to \infty} x_{j^{\prime}}^{(k)} = 0.
$
Note that, if necessary, we can pass to a subsequence of $x_{j^{\prime}}^{(k)}$. By letting
\[
\zeta_{j^{\prime}} = \beta(x_{j^{\prime}}^{(k+1)} - x_{j^{\prime}}^{(k)}) + \frac{1}{\alpha}\left(\sum_{i\in I} \sgn(A_{i}^{T} \bfx^{(k+1)} - y_{i} ) \abs{A_{i}^{T} \bfx^{(k+1)} - y_{i}}^{q-1} A_{i}\right)_{j^{\prime}},
\]
we have
\begin{equation}\label{eq:bound-entry-triangle-K}
  \abs{\phi^{\prime}(|x_{j^{\prime}}^{(k)}|)} \leq \abs{\hat{u}_{j^{\prime}}^{(k+1)}} + \abs{\zeta_{j^{\prime}}}
\end{equation}
according to~\eqref{eq:prob-z-sub-diff-q2-K}.
It follows from the boundness of $\set{\bfx^{(k)}}$ (Lemma~\ref{lem-bound-sufficient-decrease}) that $\abs{\zeta_{j^{\prime}}}$ is bounded.
The condition~\eqref{eq:inexact-opt-cond} implies that $\abs{\hat{u}_{j^{\prime}}^{(k+1)}}$ is also bounded. Thus the equation~\eqref{eq:bound-entry-triangle-K} is impossible to hold when $k \to \infty$.

For the case of $q = 1$, by letting
\[
\zeta_{j^{\prime}}=  \beta(x_{j^{\prime}}^{(k+1)} - x_{j^{\prime}}^{(k)}) +  \frac{1}{\alpha}\left(\sum_{i\in I} \eta_{i} A_{i}\right)_{j^{\prime}},
\]
we have
\[
  \abs{\phi^{\prime}(|x_{j^{\prime}}^{(k)}|)} \leq \abs{\hat{u}_{j^{\prime}}^{(k+1)}} + \abs{\zeta_{j^{\prime}}},
\]
according to~\eqref{eq:prob-z-sub-diff-q1-K}.
Using the boundness of the right-hand side, we can prove the results similarly.
\end{proof}

Compared with the lower bound theory for local minimizers in the literature~\cite{Chen2010Lower}, the bound theory in Theorem~\ref{lem-bound-theory} is for the iterative sequence and more practical.
 Theorem~\ref{lem-bound-theory} indicates that when $k \geq \sfK$, there exists $L_{c} > 0$, such that for any $ j \in S^{(\sfK)} $,
\begin{equation}\label{eq:grad-Lip-cond-K}
	\abs{ \phi^{\prime}(|x_{j}^{(k+1)}|) - \phi^{\prime}(|x_{j}^{(k)}|) }
	\leq L_{c}\abs{|x_{j}^{(k+1)}| - |x_{j}^{(k)}|}
	\leq L_{c}\abs{x_{j}^{(k+1)} - x_{j}^{(k)}}.
\end{equation}

We now derive a subgradient lower bound for the iterates gap.
%The following inequality is useful, that is
%\begin{equation}\label{eq:abs-inequality}
%  \abs{\abs{a} - \abs{b}} \leq \abs{a-b}, \forall a,b \in \bbR.
%\end{equation}

\begin{lemma}\label{lem-relative-error-condition}
For each $k \geq K$, there exists $\bfv^{(k+1)}  \in \partial \calE(\bfx^{(k+1)})$ such that
\begin{equation}\label{eq:relative-error-condition}
	\| \bfv^{(k+1)} \|_{2}
	\leq \left(L_{c} + \frac{\beta}{2} (\ve + 2) \right) \|\bfx^{(k+1)} - \bfx^{(k)} \|_{2}.
\end{equation}
\end{lemma}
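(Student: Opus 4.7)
The plan is to construct $\bfv^{(k+1)}$ explicitly from $\hat{\bfu}^{(k+1)}$ by (a) removing the proximal contribution $\beta(\bfx^{(k+1)}-\bfx^{(k)})$ and (b) replacing each ``old'' linearization weight $\phi'(|x_j^{(k)}|)$ with the ``current'' weight $\phi'(|x_j^{(k+1)}|)$. The bound will then follow from three ingredients already available: the inexact optimality condition \eqref{eq:inexact-opt-cond}, the triangle inequality, and the local Lipschitz property \eqref{eq:grad-Lip-cond-K} made possible by Theorem~\ref{lem-bound-theory}.

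Concretely, I fix $k\geq\sfK$; by Lemma~\ref{lem-finite-num-iter} we have $S^{(k)}=S^{(\sfK)}$, so $x_j^{(k)}=x_j^{(k+1)}=0$ for $j\notin S^{(\sfK)}$ and both are nonzero and bounded away from $0$ on $S^{(\sfK)}$. For $j\in S^{(\sfK)}$ I define
\[
v_j^{(k+1)} := \hat u_j^{(k+1)} - \beta\bigl(x_j^{(k+1)} - x_j^{(k)}\bigr) + \sgn(x_j^{(k+1)})\bigl[\phi'(|x_j^{(k+1)}|)-\phi'(|x_j^{(k)}|)\bigr].
\]
Comparing with \eqref{eq:prob-z-sub-diff-q2-K} and the formula \eqref{eq:obj-func-subdiff-p2} for $\partial\calE$, this is exactly the $j$-th coordinate of an element of $\partial\calE(\bfx^{(k+1)})$ (the data-fidelity contribution is already evaluated at $\bfx^{(k+1)}$ inside $\hat u_j^{(k+1)}$, so it is untouched by the swap); the $q=1$ case is identical, using \eqref{eq:prob-z-sub-diff-q1-K} and \eqref{eq:obj-func-subdiff-p1}. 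For $j\notin S^{(\sfK)}$, since $x_j^{(k+1)}=0$ the subdifferential $\partial\phi(|x_j^{(k+1)}|)=(-\infty,\infty)$, so I may select the subgradient that cancels the fidelity component at $j$ and set $v_j^{(k+1)}:=0$; this still yields $\bfv^{(k+1)}\in\partial\calE(\bfx^{(k+1)})$.

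With this choice, the estimate is routine bookkeeping. The vector $\bfv^{(k+1)}-\hat{\bfu}^{(k+1)}$ (regarded in $\bbR^{\sfN}$ with zeros outside $S^{(\sfK)}$) has $j$-th entry bounded by $\beta|x_j^{(k+1)}-x_j^{(k)}|+L_c|x_j^{(k+1)}-x_j^{(k)}|$, where the $L_c$ factor comes from \eqref{eq:grad-Lip-cond-K}, which applies precisely because Theorem~\ref{lem-bound-theory} forces $|x_j^{(k)}|,|x_j^{(k+1)}|\in[c,C]$ for all $j\in S^{(\sfK)}$. Summing over $j$ and taking square roots,
\[
\|\bfv^{(k+1)}\|_2 \leq \|\hat{\bfu}^{(k+1)}\|_2 + (\beta+L_c)\,\|\bfx^{(k+1)}-\bfx^{(k)}\|_2,
\]
and the inexact optimality condition \eqref{eq:inexact-opt-cond} converts $\|\hat{\bfu}^{(k+1)}\|_2$ into $\tfrac{\beta\ve}{2}\|\bfx^{(k+1)}-\bfx^{(k)}\|_2$, giving the announced constant $L_c+\tfrac{\beta}{2}(\ve+2)$. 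There is no real obstacle here; the only delicate point is verifying that Theorem~\ref{lem-bound-theory} is genuinely needed twice, once to guarantee that $\phi'$ is Lipschitz on the relevant interval and once to guarantee that the support never re-expands so that the zero coordinates of $\bfv^{(k+1)}$ do not accumulate error.
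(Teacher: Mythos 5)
Your proposal is correct and follows essentially the same route as the paper: you construct the same subgradient $\bfv^{(k+1)}$ (old weight swapped for the current one, proximal term removed, zero coordinates handled via $\partial\phi(|0|)=(-\infty,\infty)$), and you bound it by the triangle inequality using \eqref{eq:inexact-opt-cond} and the Lipschitz estimate \eqref{eq:grad-Lip-cond-K} from Theorem~\ref{lem-bound-theory}. The only cosmetic difference is that the paper inserts an intermediate vector $\hat{\bfv}^{(k+1)}$ and splits $\|\bfv^{(k+1)}\|_2 \leq \|\bfv^{(k+1)}-\hat{\bfv}^{(k+1)}\|_2 + \|\hat{\bfv}^{(k+1)}\|_2$, whereas you group the terms directly against $\hat{\bfu}^{(k+1)}$; the arithmetic and the final constant are identical.
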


\begin{proof}
For the case of $1 < q < \infty$, denote
\[
\begin{split}
	v_{j}^{(k+1)} &= \sgn(x_{j}^{(k+1)})\phi^{\prime}(|x_{j}^{(k+1)}|)  \\
	& \phantom{=;} + \frac{1}{\alpha} \left(\sum_{i\in I} \sgn(A_{i}^{T} \bfx^{(k+1)} - y_{i} ) \abs{A_{i}^{T} \bfx^{(k+1)} - y_{i}}^{q-1} A_{i}\right)_{j}, \forall j \in S^{(\sfK)}, \\
	v_{j}^{(k+1)} &= 0, \forall j \in J \setminus S^{(\sfK)};
\end{split}
\]
\[
\begin{split}
	\hat{v}_{j}^{(k+1)} &= \sgn(x_{j}^{(k+1)})\phi^{\prime}(|x_{j}^{(k)}|) \\
	& \phantom{=;} + \frac{1}{\alpha} \left(\sum_{i\in I} \sgn(A_{i}^{T} \bfx^{(k+1)} - y_{i} ) \abs{A_{i}^{T} \bfx^{(k+1)} - y_{i}}^{q-1} A_{i}\right)_{j}, \forall j \in S^{(\sfK)}, \\
	\hat{v}_{j}^{(k+1)} &= 0, \forall j \in J \setminus S^{(\sfK)}.
\end{split}
\]
Since $\partial \phi(|0|) = (-\infty,\infty)$, we have $\bfv^{(k+1)} = (v_{1}^{(k+1)},\dots,v_{\sfN}^{(k+1)})^{\top} \in \partial \calE(\bfx^{(k+1)})$ and
\begin{equation}\label{eq:subdiff-bound-eq1}
\begin{split}
	\| \hat{\bfv}^{(k+1)} \|_{2} &= \sqrt{\sum_{j \in S^{(\sfK)}} |\hat{v}_{j}^{(k+1)}|^{2}}
	 = \sqrt{\sum_{j \in S^{(\sfK)}} |\hat{u}_{j}^{(k+1)} - \beta (x_{j}^{(k+1)} - x_{j}^{(k)}) |^{2}} \\
	& \leq \| \hat{\bfu}^{(k+1)} \|_{2} + \beta \|\bfx^{(k+1)} - \bfx^{(k)} \|_{2} \\
[~\text{by~\eqref{eq:inexact-opt-cond}}~]~	& \leq \frac{\beta}{2}( \ve + 2 ) \|\bfx^{(k+1)} - \bfx^{(k)} \|_{2}.
\end{split}
\end{equation}
Form~\eqref{eq:grad-Lip-cond-K}, it follows that
\begin{equation}\label{eq:subdiff-bound-eq2}
\begin{split}
	& \phantom{=;}\| \bfv^{(k+1)} - \hat{\bfv}^{(k+1)} \|_{2} \\
    &= \sqrt{\sum_{j \in S^{(\sfK)}}\abs{  \sgn(x_{j}^{(k+1)})\phi^{\prime}(|x_{j}^{(k+1)}|) - \sgn(x_{j}^{(k+1)})\phi^{\prime}(|x_{j}^{(k)}|)  }^{2}}\\
[~\text{by~\eqref{eq:grad-Lip-cond-K}}~]~	 & \leq L_{c}\sqrt{\sum_{j \in S^{(\sfK)}} \abs{ |x_{j}^{(k+1)}| - |x_{j}^{(k)}| }^{2} }\\
    &\leq L_{c}\sqrt{\sum_{j \in S^{(\sfK)}} \abs{ x_{j}^{(k+1)} - x_{j}^{(k)} }^{2} } \\
 & = L_{c} \|\bfx^{(k+1)} - \bfx^{(k)} \|_{2}.
\end{split}
\end{equation}
Combining~\eqref{eq:subdiff-bound-eq1} and~\eqref{eq:subdiff-bound-eq2} yields:
\[
\begin{split}
	\| \bfv^{(k+1)} \|_{2} & \leq \| \bfv^{(k+1)} - \hat{\bfv}^{(k+1)} \|_{2} + \| \hat{\bfv}^{(k+1)} \|_{2} \\
	& \leq \left(L_{c} + \frac{\beta}{2} (\ve + 2) \right) \|\bfx^{(k+1)} - \bfx^{(k)} \|_{2}.
\end{split}
\]

For the case of $q =1$, denote
\[
\begin{split}
	v_{j}^{(k+1)} &= \sgn(x_{j}^{(k+1)})\phi^{\prime}(|x_{j}^{(k+1)}|) + \frac{1}{\alpha} \left(\sum_{i\in I} \eta_{i} A_{i}\right)_{j}, \forall j \in S^{(\sfK)}, \\
	v_{j}^{(k+1)} &= 0, \forall j \in J \setminus S^{(\sfK)};
\end{split}
\]
\[
\begin{split}
	\hat{v}_{j}^{(k+1)} &= \sgn(x_{j}^{(k+1)})\phi^{\prime}(|x_{j}^{(k)}|) + \frac{1}{\alpha} \left(\sum_{i\in I} \eta_{i} A_{i}\right)_{j}, \forall j \in S^{(\sfK)}, \\
	\hat{v}_{j}^{(k+1)} &= 0, \forall j \in J \setminus S^{(\sfK)}.
\end{split}
\]
In a similar way, we can prove that \eqref{eq:relative-error-condition} holds .
\end{proof}

Finally, we establish our main convergence result. An important tool for establishing the convergence is based on the so-called Kurdyka-\L ojasiewicz (KL) property, which has attracted a lot of attention in recent years. Related preliminaries have been provided in Appendix~\ref{sec:appendix}.

\begin{theorem}
The sequences $\set{\bfx^{(k)}}$  generated by {\rm InISSAPL} converges globally to the limit point $\bfx^{\ast}$, which is a stationary point of $\calE$.
\end{theorem}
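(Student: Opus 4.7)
The plan is to invoke the standard Kurdyka-\L ojasiewicz (KL) convergence framework (in the spirit of Attouch-Bolte-Svaiter), since the three structural ingredients needed for that machinery have already been produced by the preceding lemmas: sufficient decrease (Lemma~\ref{lem-bound-sufficient-decrease}(i)), a subgradient upper bound relative to the iterates gap (Lemma~\ref{lem-relative-error-condition}), and boundedness of $\set{\bfx^{(k)}}$ (Lemma~\ref{lem-bound-sufficient-decrease}(ii)). Throughout I would work with $k \geq \sfK$, where by Lemma~\ref{lem-finite-num-iter} the support is stabilized to $S^{(\sfK)}$, so that $\bfv^{(k+1)} \in \partial\calE(\bfx^{(k+1)})$ is constructed (as in Lemma~\ref{lem-relative-error-condition}) by filling in zeros outside $S^{(\sfK)}$.

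First I would identify the limit-point set $\omega := \omega(\bfx^{(0)})$ and verify that every $\bfx^\ast \in \omega$ is a stationary point. Boundedness gives $\omega \neq \emptyset$. For $\bfx^{(k_j)} \to \bfx^\ast$, the vanishing-gap property $\|\bfx^{(k+1)}-\bfx^{(k)}\|_2 \to 0$ ensures $\bfx^{(k_j+1)} \to \bfx^\ast$, and Lemma~\ref{lem-relative-error-condition} then forces $\bfv^{(k_j+1)} \to 0$. Together with continuity of $\calE$ and the closedness of the graph of $\partial\calE$ (equivalent here to upper semicontinuity of the limiting subdifferential for the sum decomposition in~\eqref{eq:obj-func-subdiff-p2} and~\eqref{eq:obj-func-subdiff-p1}), this yields $0 \in \partial\calE(\bfx^\ast)$. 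Monotonicity of $\set{\calE(\bfx^{(k)})}$ plus continuity also gives that $\calE$ is constant on $\omega$, equal to some value $\calE^\ast$.

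Next I would apply the KL property of $\calE$ at $\bfx^\ast$ (referenced in the appendix): on a neighborhood of $\omega$ there is a desingularizing function $\varphi$ with $\varphi'(\calE(\bfx)-\calE^\ast)\cdot\dist(0,\partial\calE(\bfx)) \geq 1$. Combining this with the sufficient-decrease inequality~\eqref{eq:sufficient-decrease-condition} and the relative-error estimate~\eqref{eq:relative-error-condition}, a standard concavity-of-$\varphi$ telescoping argument produces
\[
\sum_{k \geq \sfK} \|\bfx^{(k+1)} - \bfx^{(k)}\|_2 < \infty,
\]
so $\set{\bfx^{(k)}}$ is Cauchy and converges to some $\bfx^\ast$. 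By Step~1 this $\bfx^\ast$ is a stationary point, completing the proof. A uniformization step (as in \cite{Attouch2013Convergence}) is used to pick a single $\varphi$ valid on a whole neighborhood of the compact set $\omega$ rather than pointwise.

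The main obstacle, and the reason care is needed, is ensuring the KL apparatus legitimately applies to the non-Lipschitz nonsmooth $\calE$. Two subtleties arise: first, verifying that $\calE$ is a KL function (this follows from its semi-algebraic/subanalytic structure, since $x \mapsto |x|^p$ with rational $p$ and $|A_i^T\bfx - y_i|^q$ with rational $q$ are semi-algebraic, while for general real $p,q$ one falls back on subanalytic / log-exp structures); second, handling the non-Lipschitz behavior of $\phi'$ near zero. The second point is precisely what the iteration-sequence bound theory (Theorem~\ref{lem-bound-theory}) and the stabilization of the support (Lemma~\ref{lem-finite-num-iter}) resolve, by confining the nonzero entries of $\bfx^{(k)}$ for $k \geq \sfK$ to the region $[c,C]$ on which $\phi'$ is $L_c$-Lipschitz, as already exploited in~\eqref{eq:grad-Lip-cond-K}.
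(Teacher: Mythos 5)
Your proposal is correct and follows essentially the same route as the paper: the paper's proof likewise assembles the sufficient-decrease inequality~\eqref{eq:sufficient-decrease-condition}, the subgradient bound of Lemma~\ref{lem-relative-error-condition}, and the continuity/boundedness from Lemma~\ref{lem-bound-sufficient-decrease}, and then invokes the KL property of $\calE$ via the abstract convergence theorem of Attouch--Bolte--Svaiter. You merely unfold that cited black box into its standard internal steps (limit-point set, desingularizing function, telescoping to finite length), which matches the paper's argument in substance.
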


\begin{proof}

Since $\set{\bfx^{(k)}}$ is bounded (Lemma~\ref{lem-bound-sufficient-decrease}), there exists a subsequence $(\bfx^{(k_{l})})$ and $\bfx^{\ast}$ such that
\begin{equation}\label{eq:continuity-condition}
  \bfx^{(k_{l})} \to \bfx^{\ast} \text{ and } \calE(\bfx^{(k_{l})}) \to \calE(\bfx^{\ast}),
\text{as } l \to \infty.
\end{equation}

The function $\calE$ satisfies the KL~property~\cite{Attouch2013Convergence}.
Combing~\eqref{eq:sufficient-decrease-condition},~\eqref{lem-relative-error-condition} and~\eqref{eq:continuity-condition}, and by Theorem 2.9 in~\cite{Attouch2013Convergence},  the sequence $\set{\bfx^{(k)}}$ converges globally to the limit point $\bfx^{\ast}$, which is a stationary point of $\calE$.
\end{proof}

%----------------------------------------------------
\section{Algorithm implementation}
\label{sec:imlpem}

The subproblem in InISSAPL is a weighted $\ell_1$ minimization. Some standard methods like ADMM~\cite{Boyd2011Distributed,Wu2010Augmented,He2012On,Yan2016Self}, split Bregman method~\cite{Goldstein2009Split,Cai2009Convergence,Cai2009Linearized} and primal-dual algorithm~\cite{Chambolle2011first} can be used to efficiently solve it. We here adopt ADMM.
For clarity of description in this section, we refer to $S^{(k)}$, $\phi^{\prime}(|z_{j}^{(k)}|)$, $\bfB^{(k)}$ and $\bfz^{(k)}$ by $S$, $w_{j}$, $\bar{\bfB}$ and $\bar{\bfz}$, respectively. Consequently, \eqref{eq:prob-z} becomes
\begin{equation}\label{eq:alg-supproblem}
  \min_{\bfz}\sum_{ j \in S } w_{j} |z_{j}|	+ \frac{1}{q\alpha} \| \bar{\bfB} \bfz - \bfy \|_{q}^{q} + \frac{\beta}{2}\| \bfz - \bar{\bfz} \|_{2}^{2}.
\end{equation}

We rewrite~\eqref{eq:alg-supproblem} to the following constrained optimization problem:
\begin{equation}\label{eq:alg-supproblem-constrained}
    \begin{aligned}
      &  \min_{\bfz,\bfs,\bft}\sum_{ j \in S } w_{j} |s_{j}|	+ \frac{1}{q\alpha} \| \bft \|_{q}^{q} + \frac{\beta}{2}\| \bfz - \bar{\bfz} \|_{2}^{2}, \\
      & \text{s. t. } \bfz = \bfs,   \bar{\bfB} \bfz  - \bfy = \bft,
    \end{aligned}
\end{equation}
and define the augmented Lagrangian functional for the problem~\eqref{eq:alg-supproblem-constrained} as follows:
\[
\begin{split}
   \mathcal{L}(\bfz,\bfs,\bft;\lambda,\mu) & = \sum_{ j \in S } w_{j} |s_{j}|	+ \frac{1}{q\alpha} \| \bft \|_{q}^{q} + \frac{\beta}{2}\| \bfz - \bar{\bfz} \|_{2}^{2} + \langle \boldsymbol\lambda, \bfz - \bfs \rangle  \\
     & \phantom{=;} + \langle \boldsymbol\mu,  (\bar{\bfB} \bfz - \bfy ) - \bft \rangle + \frac{\gamma}{2} \| \bfz - \bfs \|_{2}^{2} + \frac{\delta}{2} \| (\bar{\bfB} \bfz - \bfy ) - \bft\|_{2}^{2},
\end{split}
\]
where $\gamma, \delta > 0$ are the penalty parameters and $\boldsymbol\lambda, \boldsymbol\mu$ are the Lagrangian multipliers. The ADMM for solving~\eqref{eq:alg-supproblem} is described as follows.

%----------------------ADMM------------------------------
\begin{mdframed}[frametitle = {ADMM: Alternating Direction Method of Multipliers for Solving~\eqref{eq:alg-supproblem}}, frametitlerule = true]

%\noindent{\bf ADMM: Alternating Direction Method of Multipliers for Solving~\eqref{eq:alg-supproblem}}

\noindent{\bf Initialization:} Start with $\bfz^{(0)} = \bar{\bfz}, \boldsymbol\lambda^{(0)} = \mathbf{0}, \boldsymbol\mu^{(0)}= \mathbf{0}$.

\noindent {\bf Iteration:} For $l = 0, 1, \ldots, \textmd{MAXit}$,
\begin{enumerate}
\renewcommand{\labelenumi}{ \arabic{enumi}.}
  \item Compute
\begin{equation}\label{eq:admm-s-t}
  (\bfs^{(l+1)}, \bft^{(l+1)}) = \arg \min_{\bfs, \bft} \mathcal{L}(\bfz^{(l)},\bfs,\bft;\boldsymbol\lambda^{(l)},\boldsymbol\mu^{(l)}).
\end{equation}
  \item  Compute
\begin{equation}\label{eq:admm-z}
  \bfz^{(l+1)} = \arg \min_{\bfz} \mathcal{L}(\bfz,\bfs^{(l)},\bft^{(l)};\boldsymbol\lambda^{(l)},\boldsymbol\mu^{(l)}).
\end{equation}
  \item Update
\begin{align}
  \boldsymbol\lambda^{(l+1)} & = \boldsymbol\lambda^{(l)} + \gamma( \bfz^{(l+1)} - \bfs^{(l+1)}),\\
  \boldsymbol\mu^{(l+1)} & = \boldsymbol\mu^{(l)} + \delta( (\bar{\bfB} \bfz^{(l+1)} - \bfy ) -  \bft^{(l+1)} ).
\end{align}
\end{enumerate}
\end{mdframed}

ADMM can solve \eqref{eq:alg-supproblem} to any accuracy. Considering the computational efficiency, we utilize in practice, the following stopping criterion \cite{Boyd2011Distributed}:
\[
    \norm{\tau^{(l+1)}}_2 \leq \sqrt{\sfM}\epsilon^{\mathrm{abs}} + \epsilon^{\mathrm{rel}} \max\set{\norm{\left[
                       \begin{array}{cc}
                         \bfI &  \\
                          & \bar{\bfB} \\
                       \end{array}
                     \right]
\left[
  \begin{array}{c}
    \bfz^{(l+1)} \\
    \bfz^{(l+1)} \\
  \end{array}
\right]
}_{2},
\norm{\left[
        \begin{array}{c}
          -\bfs^{(l+1)} \\
          -\bft^{(l+1)} \\
        \end{array}
      \right]
}_{2},
\norm{\left[
        \begin{array}{c}
          0 \\
          \bfy \\
        \end{array}
      \right]
}_{2}
},
\]
\[
    \norm{\upsilon^{(l+1)}}_{2} \leq \sqrt{\sfN}\epsilon^{\mathrm{abs}} + \epsilon^{\mathrm{rel}}
\norm{
\left[
       \begin{array}{cc}
            \bfI &  \\
             & \bar{\bfB} \\
       \end{array}
    \right]^{T}
\left[
        \begin{array}{c}
           \boldsymbol\lambda^{(l+1)}\\
          \boldsymbol\mu^{(l+1)} \\
        \end{array}
      \right]
}_{2},
\]
where $\tau^{(l+1)} =
\left[
  \begin{array}{c}
    \bfz^{(l+1)} - \bfs^{(l+1)} \\
    \bar{\bfB}\bfz^{(l+1)}  -\bfy -\bft^{(l+1)}\\
  \end{array}
\right]
$,
$\upsilon^{(l+1)} =
\left[
       \begin{array}{cc}
            \bfI &  \\
             & \bar{\bfB} \\
       \end{array}
    \right]^{T}
\left[
  \begin{array}{c}
    \gamma(\bfs^{(l)} - \bfs^{(l+1)}) \\
    \delta(\bft^{(l)} - \bft^{(l+1)}) \\
  \end{array}
\right],
$
are primal and dual residuals, respectively, at the $l$the iteration. $\epsilon^{\mathrm{abs}} > 0$ ia an absolute tolerance and $\epsilon^{\mathrm{rel}}$ is a relative tolerance.

The subproblems~\eqref{eq:admm-s-t} and~\eqref{eq:admm-z} can be  efficiently solved.
\begin{enumerate}
\renewcommand{\labelenumi}{ \arabic{enumi}.}
\renewcommand{\labelenumii}{ \arabic{enumi}.\arabic{enumii}.}
  \item For ~\eqref{eq:admm-s-t}, the minimization with respect to $\bfs$ and $\bft$ is
\[
    \min_{\bfs, \bft}  \sum_{ j \in S } w_{j} |s_{j}|	+ \frac{1}{q\alpha} \| \bft \|_{q}^{q}- \langle \boldsymbol\lambda^{(l)}, \bfs  \rangle
     - \langle \boldsymbol\mu^{(l)}, \bft \rangle + \frac{\gamma}{2} \| \bfz^{(l)} - \bfs  \|_{2}^{2} + \frac{\delta}{2} \| (\bar{\bfB} \bfz^{(l)}  - \bfy ) - \bft \|_{2}^{2},
\]
which can be separated into two independent subproblems.

\begin{enumerate}
  \item The minimization~\eqref{eq:admm-s-t} with respect to $\bfs$
\[
    \min_{\bfs}\sum_{ j \in S } w_{j} |s_{j}|  + \frac{\gamma}{2} \| \bfs - \bfz^{(l)} - \frac{\boldsymbol\lambda^{(l)}}{\gamma} \|_{2}^{2},
\]
has the following closed form solution:
\[
    s_{j}^{(l+1)} = \sgn(z_{j}^{(l)} + \frac{\lambda_{j}^{(l)} }{ \gamma})\max\set{\abs{z_{j}^{(l)} + \frac{\lambda_{j}^{(l)} }{ \gamma} } -  \frac{w_{j} }{ \gamma},0}, \forall j \in S.
\]
  \item The minimization~\eqref{eq:admm-s-t} with respect to $\bft$ is
\[
    \min_{\bft} \frac{1}{q\alpha} \| \bft \|_{q}^{q} + \frac{\delta}{2} \| \bft - (\bar{\bfB} \bfz^{(l)}  - \bfy ) - \frac{\boldsymbol\mu^{(l)}}{\delta}\|_{2}^{2}.
\]
For $q = 1$,
$
  t_{i}^{(l+1)} = \sgn( (\bar{\bfB} \bfz^{(l)} - \bfy )_{i} + \frac{\mu_{i}^{(l)}}{ \delta} )\max\set{\abs{(\bar{\bfB} \bfz^{(l)} - \bfy )_{i}+ \frac{\mu_{i}^{(l)}}{ \delta} } - \frac{1}{ \delta\alpha}, 0 }, \forall i \in I
$. For $q = 2$, $
  t_{i}^{(l+1)} = \frac{\delta \alpha (\bar{\bfB} \bfz^{(l)} - \bfy )_{i} + \alpha \mu_{i}^{(l)}}{1 + \delta\alpha}, \forall i \in I
$. For other $1 < q < \infty$, we can find $t_{j}^{(l+1)}$ via any numerical procedure such as Newton's method.
\end{enumerate}
  \item For~\eqref{eq:admm-z}, the minimization with respect to $\bfz$ is a quadratic optimization problem,
\[
    \min_{\bfz}  \frac{\beta}{2}\| \bfz - \bar{\bfz} \|_{2}^{2} + \langle \boldsymbol\lambda^{(l)}, \bfz \rangle  + \langle \boldsymbol\mu^{(l)}, \bar{\bfB} \bfz  \rangle + \frac{\gamma}{2} \|  \bfz - \bfs^{(l+1)} \|_{2}^{2} + \frac{\delta}{2} \| (\bar{\bfB} \bfz  - \bfy ) - \bft^{(l+1)}\|_{2}^{2}.
\]
Its optimality condition gives a linear system
\begin{equation*}
  (\beta  + \gamma  + \delta \bar{\bfB}^{T}\bar{\bfB}) \bfz = \beta \bar{\bfz} + \gamma \bfs^{(l+1)}  + \delta\bar{\bfB}^{T}(\bfy + \bft^{(l+1)}) -  \boldsymbol\lambda^{(l)} - \bar{\bfB}^{T}\boldsymbol\mu^{(l)},
\end{equation*}
\end{enumerate}
which can be solved efficiently~\cite{Boyd2011Distributed}.

\begin{remark}
  For $q = 2$, we actually only need to introduce one new variable $\bfs$.
\end{remark}

\section{Numerical experiments}
\label{sec:num}

\graphicspath{{figures/}}

In this section, we present numerical experiments to demonstrate the efficiency of the InISSAPL algorithm. All the tests were performed using Windows 10 and  \textsc{Matlab} R2016a 64-bit on a HP Z228 microtower workstation with an Intel(R) Core(TM) i7-4790 CPU @3.60GHz and 8GB memory.

In our experiments, we generated the true signal $\bfx^{o}$ of the sparsity $\kappa$ supported on a random index set with independently and identically distributed Gaussian entries. For the InISSAPL algorithm, we chose $\textmd{MAXit} = 1000$, $\epsilon^{\mathrm{abs}} = 10^{-7}$ and $\epsilon^{\mathrm{abs}} = 10^{-5}$  in the inner ADMM and adopted the following stopping criteria for the outer iteration
\[
    \frac{\norm{\bfx^{(k+1)} - \bfx^{(k)}}_{2}}{\norm{\bfx^{(k)}}_{2}} \leq 10^{-3}.
\]

\subsection{Choice of $p$}
In our first example, we tested the InISSAPL algorithm for $\ell_{p}$-$\ell_2$ minimization to recover sparse vectors with $p$ varying among $\set{0.1,0.3,0.5,0.7,0.9}$. We used a $3000\times5000$ random Gaussian matrix $\bfA$ and a true signal $\bfx^{o}$ of the sparsity $\kappa = 500$. The Gaussian noises with $\sigma = 0.01$ and $\sigma = 0.1$ were added to the clean signal $\bfA \bfx^{o}$ to simulate the measurements $\bfy$. The InISSAPL algorithm was applied to get recovered signals $\bfx^{\ast}$. To show the performance of our algorithm, we chose the results of LASSO as the benchmarks, which is solved by ADMM-lasso~\cite{Boyd2011Distributed}. For ADMM-lasso, we also set $\epsilon^{\mathrm{abs}} = 10^{-7}$ and $\epsilon^{\mathrm{abs}} = 10^{-5}$. We show the relative $L_2$ error $\frac{\norm{\bfx^{\ast} - \bfx^{o}}_{2}}{\norm{\bfx^{\ast}}_{2}}$ in Table~\ref{tab-diff-p-L2-error}. As can be seen, for the low level noise, our InISSAPL algorithm with smaller $p$ generates better results. However, with $p=0.5$, it is more robust to different levels of noise.

\begin{table}[htbp]
  \centering
  \caption{Relative $L_2$ errors of LASSO and InISSAPL for $\ell_{p}$-$\ell_2$ minimization with different $p$.}
    \begin{tabular}{ccccccccc}
    \toprule
          &       & LASSO        & $p = 0.1$        & $p = 0.3$        & $p = 0.5$        & $p = 0.7$        & $p = 0.9$ \\
\cmidrule{3-7}    $\kappa=500$ &       & $L_2$ Error        & $L_2$ Error        & $L_2$ Error        & $L_2$ Error        & $L_2$ Error        & $L_2$ Error \\
    \midrule
    $\sigma=0.01$ &       &   0.05633           &    0.01379          &    0.01669          &    0.01719          &    0.01589          &  0.02118\\
    $\sigma=0.1$ &       &   0.26325           &   0.20197           &   0.17989           &    0.17310          &    0.18830          &  0.21276\\
    \bottomrule
    \end{tabular}%
  \label{tab-diff-p-L2-error}%
\end{table}%

\subsection{Robust recovery from measurements with heavy-tailed noise}
In the third example, we tested our InISSAPL algorithm for $\ell_{p}$-$\ell_{1}$ minimization. We chose $p = 0.5$. We generated a $256\times512$ random Gaussian $\bfA$ and a true signal $\bfx^{o}$ of the sparsity $\kappa = 25$. The $\bfA \bfx^{o}$ was corrupted by impulsive noise obeying the standard $(0,1)$-Cauchy distribution, which is scaled by a factor of $10^{-2}$. Figure~\ref{fig-Cauchy}(a) shows both the noiseless and noisy observations. The noisy observed signal approximates closely the noiseless observation almost everywhere except two outliers at the entries 124 and 249. Figure~\ref{fig-Cauchy}(b)-(d) show the recovered signals by LASSO, InISSAPL for $\ell_{p}$-$\ell_{2}$ minimization, and InISSAPL for $\ell_{p}$-$\ell_{1}$ minimization. Note that LASSO and $\ell_{p}$-$\ell_{2}$ minimization are very sensitive to the outliers and failed to reconstruct the signal. As can be seen in Figure~\ref{fig-Cauchy}(d), the $\ell_{p}$-$\ell_{1}$ minimization is able to recover the sparse signal with high accuracy.

\begin{figure}[htbp]
 \captionsetup[subfigure]{justification=centering}
  \centering
  \begin{tabular}{c@{\hspace{1mm}}c@{\hspace{1mm}}}
  \subfloat[][Noiseless and noisy observed signals]
  {\includegraphics[width=0.50\textwidth]{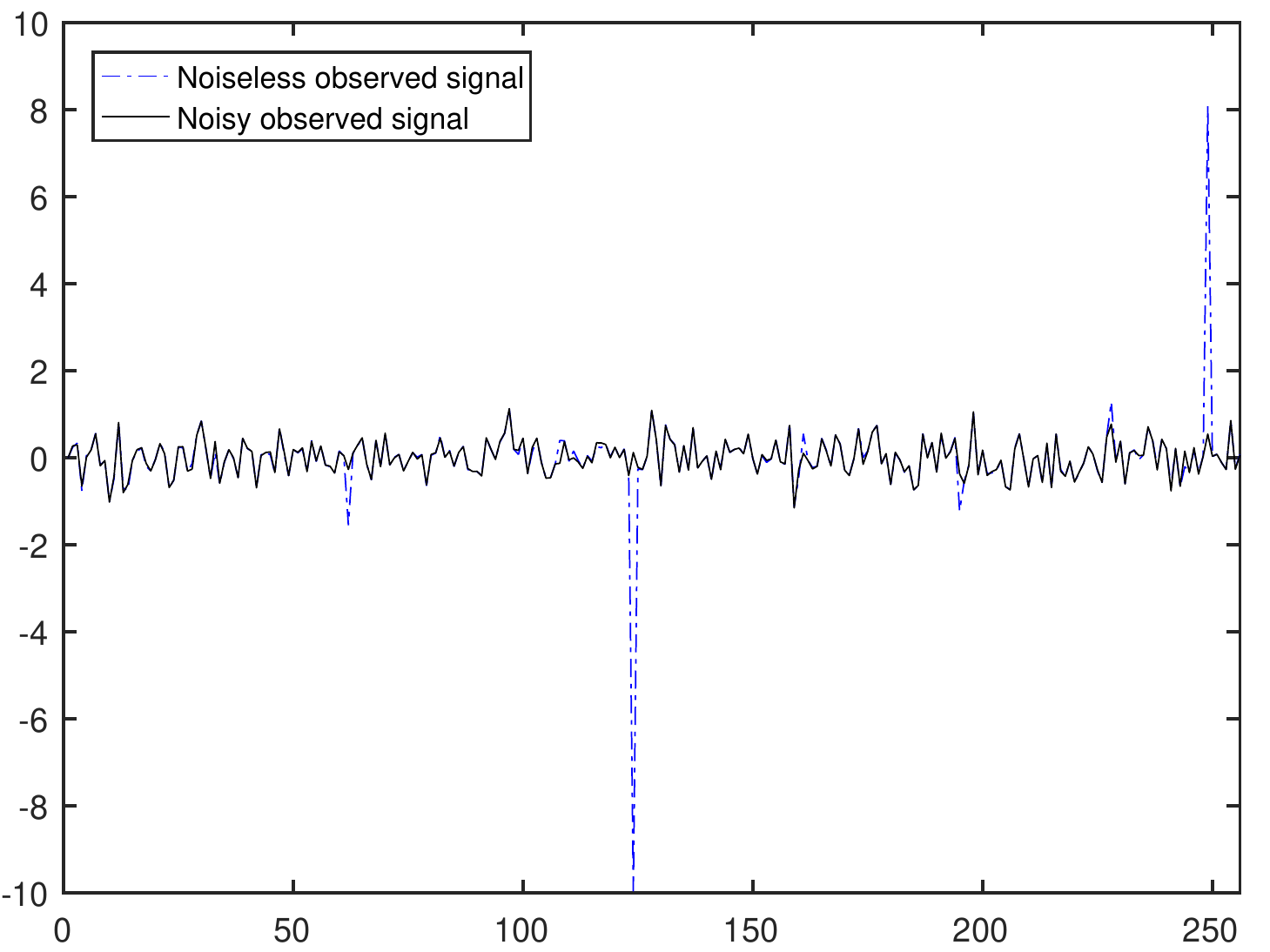}}&
  \subfloat[][Recovered signal by LASSO]
  {\includegraphics[width=0.50\textwidth]{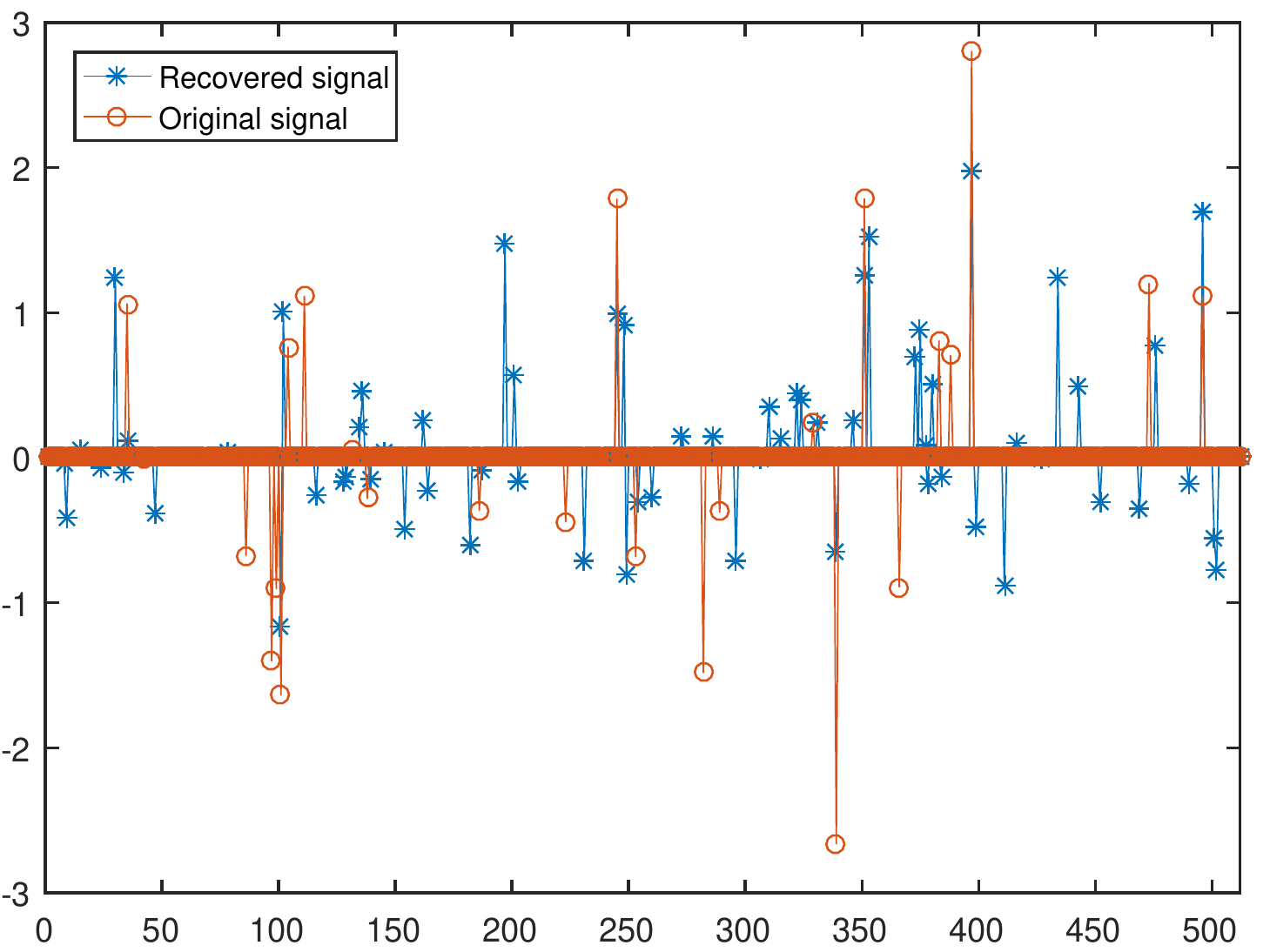}}\\
  \subfloat[][Recovered signal by $\ell_{p}$-$\ell_{2}$  via InISSAPL]
  {\includegraphics[width=0.50\textwidth]{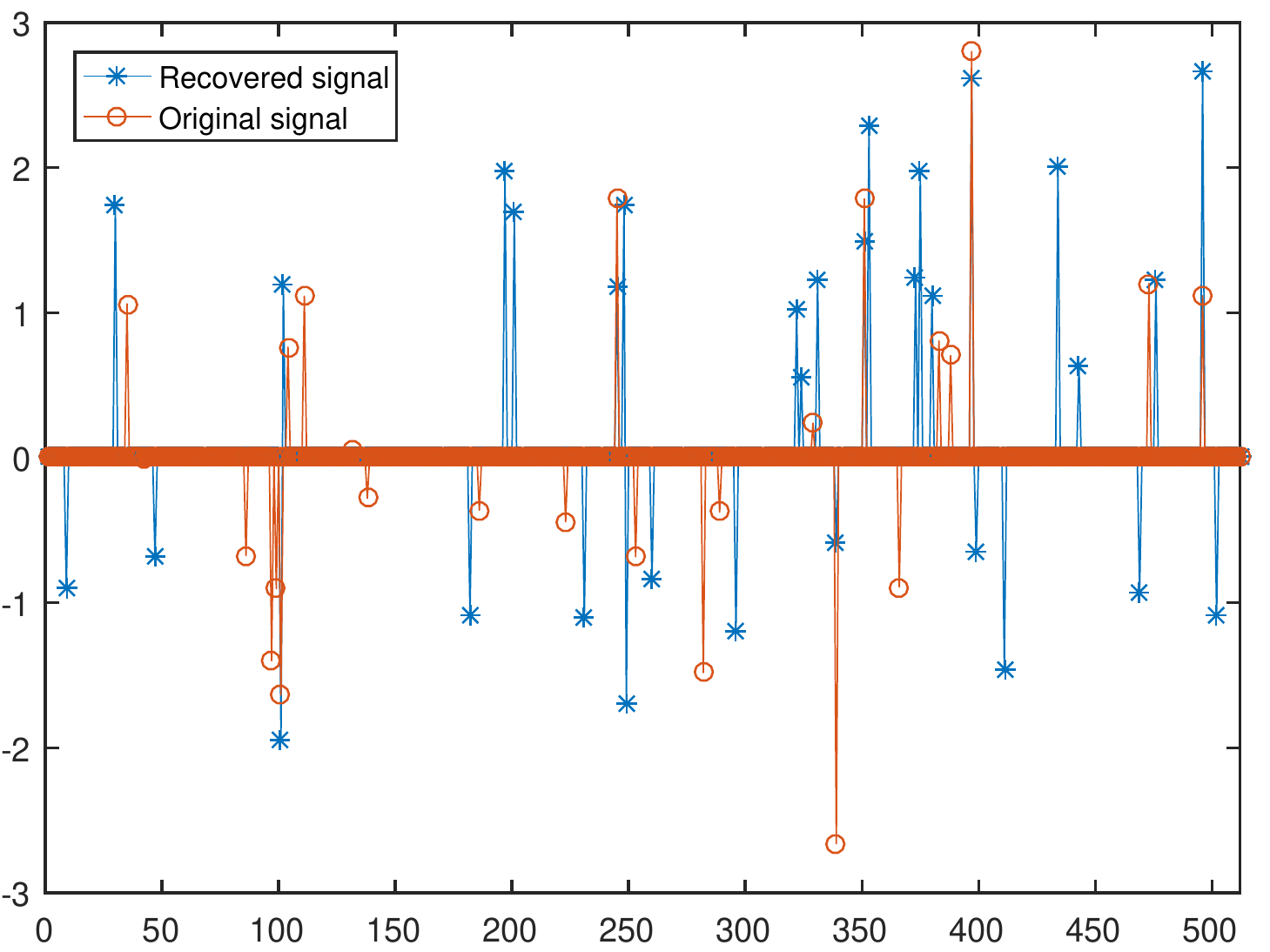}}&
  \subfloat[][Recovered signal by $\ell_{p}$-$\ell_{1}$  via InISSAPL]
  {\includegraphics[width=0.50\textwidth]{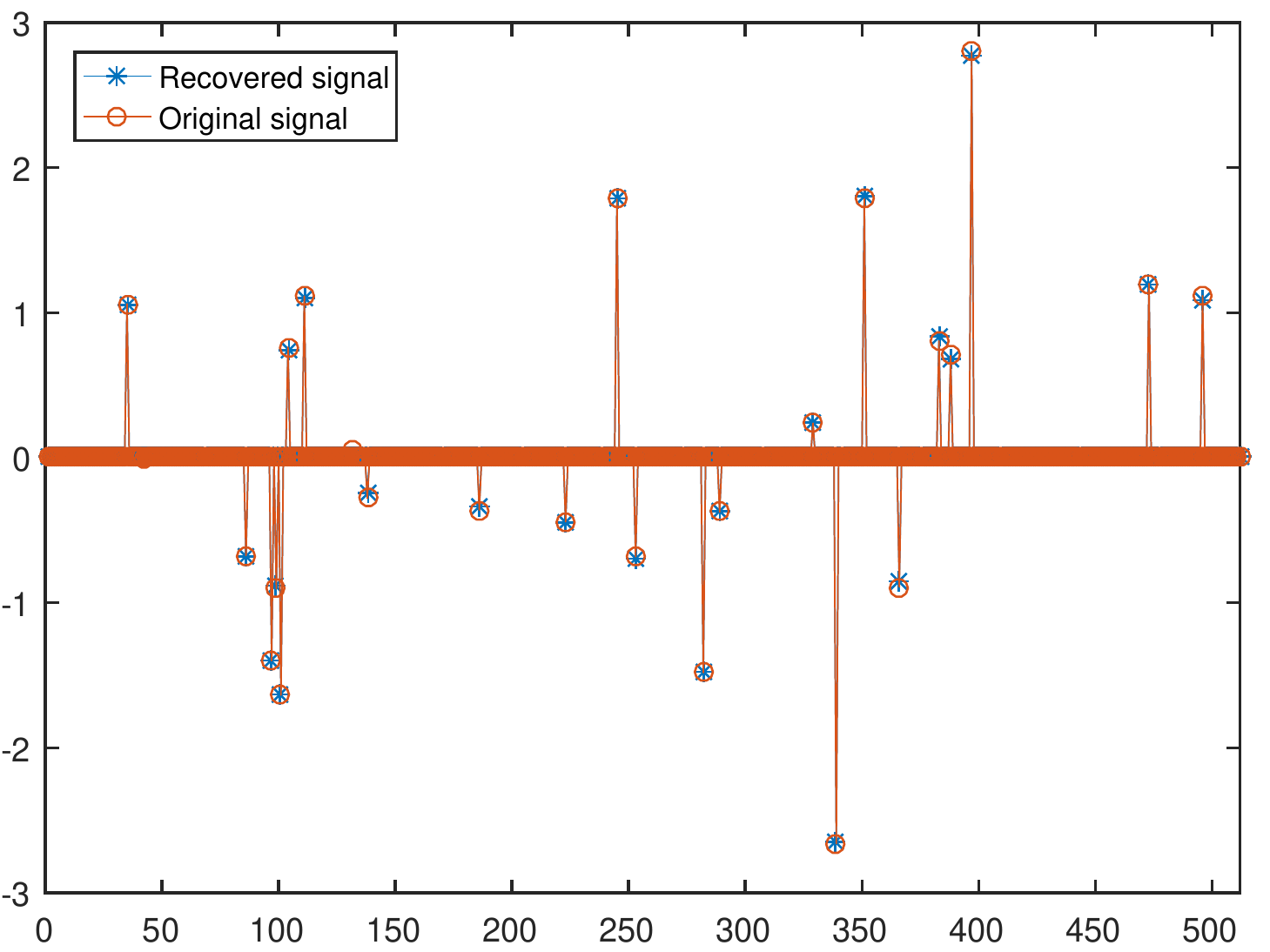}}
   \end{tabular}
  \caption{Reconstructions of a sparse signal from measurements corrupted by impulsive noise using different methods. $\ell_{p}$-$\ell_{1}$ minimization via InISSAPL is more robust to this case.}
  \label{fig-Cauchy}
\end{figure}

\section{Conclusions}
\label{sec:conclusion}

We proposed an iterative support shrinking algorithm for non-Lipschtiz $\ell_{p}$-$\ell_{q}$ minimization. The proposed algorithm overcomes the non-Lipschtizian by iteratively adding constraints on the support to the original problem. It is a new type of reweighted $\ell_{1}$ algorithm. The algorithm is easy to implement. The subproblem in each iteration is solved inexactly by ADMM. We proved the global convergence of the iterative sequence, whose limit is a stationary point of the $\ell_{p}$-$\ell_{q}$ objective function. We also showed a more practical lower bound theory of the iterates. Numerical experiments demonstrated the performance of the algorithm. Due to the successive size reduction of the subproblem, it has good potentials in applications for large scale sparse signal recovery problems.

\section{Appendix}
\label{sec:appendix}

We recall some definitions and results here.

\begin{definition}[Subdifferentials~\cite{Rockafellar1998Variational}]\label{def-subdiff}
  Let $h: \mathbb{R}^{\sfN} \to \mathbb{R}\cup\{+\infty\}$ be a proper, lower semicontinuous function.
  \begin{enumerate}
    \item The \emph{regular subdifferential} of $h$ at $\bar{\bfx} \in \dom h = \{\bfx \in \mathbb{R}^{\sfN}: h(\bfx) < +\infty  \}$ is defined as
    \[
    \widehat{\partial}h(\bar{\bfx} ) := \set{\bfv \in\mathbb{R}^{\sfN}:
    \liminf_{\substack{ \bfx \to \bar{\bfx} \\ \bfx \neq \bar{\bfx} }}\frac{h(\bfx)-h(\bar{\bfx})- \langle \bfv,\bfx -\bar{\bfx} \rangle}{\|\bfx-\bar{\bfx}\|}\geq 0
    };
    \]
    \item The (limiting) \emph{subdifferential} of $h$ at $\bar{\bfx}  \in \dom h $ is defined as
     \[
    \partial h(\bar{\bfx} ):=\set{\bfv \in\mathbb{R}^{\sfN}: \exists \bfx^{(k)} \to \bar{\bfx} , h(\bfx^{(k)}) \to h(\bfx), \bfv^{(k)}\in \widehat{\partial} h(\bfx^{(k)}), \bfv^{(k)} \to \bfv
}.
    \]
  \end{enumerate}
\end{definition}

\begin{remark}
Form Definition~\ref{def-subdiff}, the following properties hold:
\begin{enumerate}
  \item For any $\bar{\bfx}  \in \dom h $, $\widehat{\partial}h(\bar{\bfx} ) \subset \partial h(\bar{\bfx} )$. If $h$ is continuously differentiable at $\bar{\bfx} $, then $\widehat{\partial}h(\bar{\bfx} ) = \partial h(\bar{\bfx} )= \set{\nabla h(\bar{\bfx} )}$;
  \item For any $\bar{\bfx}  \in \dom h $, the subdifferential set $\partial h(\bar{\bfx} )$ is closed, i.e,
\[
\set{\bfv \in\mathbb{R}^{\sfN}:\exists \bfx^{(k)} \to \bar{\bfx}, h(\bfx^{(k)}) \to h(\bar{\bfx}), \bfv^{(k)}\in\partial h(\bfx^{(k)}), \bfv^{(k)} \to \bfv }\subset \partial h(\bar{\bfx} ).
\]
\end{enumerate}
\end{remark}

The foundational works on the Kurdyka-\L ojasiewicz (KL) property property are
due to \L ojasiewicz~\cite{Lojasiewicz1963Une} and Kurdyka~\cite{Kurdyka1998gradients}. For the development of the appliciation of KL property in optimization theory, see~\cite{Bolte2006ojasiewicz,Attouch2010Proximal,Attouch2013Convergence,Bolte2014Proximal}
and reference therein.

\begin{definition}[Kurdyka-\L ojasiewicz Property~\cite{Attouch2010Proximal}]
    A proper function $h$ is said to have the \emph{Kurdyka-\L ojasiewicz property} at $\bar{\bfx} \in \dom \partial h = \{\bfx \in \mathbb{R}^{\sfN}: \partial h(\bfx) \neq \emptyset\}$ if there exist $\zeta \in (0, +\infty]$, a neighborhood $U$ of $\bar{\bfx}$, and a continuous concave function $\varphi: [0, \zeta) \to \mathbb{R}_{+}$ such that
    \begin{enumerate}
      \item $\varphi(0) = 0$;
      \item $\varphi(0)$ is $C^{1}$ on $(0,\zeta)$;
      \item for all $s \in (0,\zeta)$, $\varphi^{\prime}(s) > 0$;
      \item for all $\bfx \in U$ satisfying $h(\bar{\bfx}) < h(\bfx) < h(\bar{\bfx}) + \zeta$, the Kurdyka-\L ojasiewicz inequality holds:
      \[
        \varphi^{\prime}(h(\bfx) - h(\bar{\bfx})) \dist (0,\partial h(\bfx)) \geq 1.
      \]
      where $\dist (0,\partial h(\bfx)) = \min\{\| \bfv \|: \bfv \in \partial h(\bfx) \}$,
    \end{enumerate}
\end{definition}

A proper, lower semicontinuous function $h$ satisfying the KL property at all points in $\dom \partial h$ is called a \emph{KL function}. On can refer to~\cite{Attouch2010Proximal,Attouch2013Convergence,Bolte2014Proximal} for examples of KL functions. For this paper, the function $\calE$ satisfies the KL~property~\cite{Attouch2013Convergence}.


\begin{thebibliography}{10}

\bibitem{Attouch2010Proximal}
H.~Attouch, J.~Bolte, P.~Redont, and A.~Soubeyran.
\newblock Proximal alternating minimization and projection methods for
  nonconvex problems: An approach based on the {Kurdyka-{\L}ojasiewicz}
  inequality.
\newblock {\em Math. Oper. Res.}, 35(2):438--457, April 30 2010.

\bibitem{Attouch2013Convergence}
H.~Attouch, J.~Bolte, and B.~F. Svaiter.
\newblock Convergence of descent methods for semi-algebraic and tame problems:
  Proximal algorithms, forward-backward splitting, and regularized
  {{Gauss}}-{{Seidel}} methods.
\newblock {\em Math. Program.}, 137(1-2):91--129, 2013.

\bibitem{Beck2009fast}
A.~Beck and M.~Teboulle.
\newblock A fast iterative shrinkage-thresholding algorithm for linear inverse
  problems.
\newblock {\em SIAM J. Imaging Sci.}, 2(1):183--202, 2009.

\bibitem{Bian2013Worst}
W.~Bian and X.~Chen.
\newblock Worst-case complexity of smoothing quadratic regularization methods
  for non-{Lipschitzian} optimization.
\newblock {\em SIAM J. Optim.}, 23(3):1718--1741, 2013.

\bibitem{Bolte2006ojasiewicz}
J.~Bolte, A.~Daniilidis, and A.~Lewis.
\newblock The {{\L}ojasiewicz} inequality for nonsmooth subanalytic functions
  with applications to subgradient dynamical systems.
\newblock {\em SIAM J. Optim.}, 17(4):1205--1223, 2006.

\bibitem{Bolte2014Proximal}
J.~B. Bolte, S.~Sabach, and M.~Teboulle.
\newblock Proximal alternating linearized minimization for nonconvex and
  nonsmooth problems.
\newblock {\em Math. Program.}, 146(1-2):459--494, 2014.

\bibitem{Boyd2011Distributed}
S.~Boyd, N.~Parikh, E.~Chu, B.~Peleato, and J.~Eckstein.
\newblock Distributed optimization and statistical learning via the alternating
  direction method of multipliers.
\newblock {\em Found. Trends Mach. Learn.}, 3(1):1--122, Jan. 2011.

\bibitem{Bredies2015Minimization}
K.~Bredies, D.~A. Lorenz, and S.~Reiterer.
\newblock Minimization of non-smooth, non-convex functionals by iterative
  thresholding.
\newblock {\em J. Optim. Theory Appl.}, 165(1):78--112, Apr. 2015.

\bibitem{Bruckstein2009From}
A.~M. Bruckstein, D.~L. Donoho, and M.~Elad.
\newblock From sparse solutions of systems of equations to sparse modeling of
  signals and images.
\newblock {\em SIAM Rev.}, 51(1):34--81, 2009.

\bibitem{Cai2009Convergence}
J.~Cai, S.~Osher, and Z.~Shen.
\newblock Convergence of the linearized bregman iteration for {$\ell_1$}-norm
  minimization.
\newblock {\em Math. Comput.}, 78(268):2127--2136, 2009.

\bibitem{Cai2009Linearized}
J.~Cai, S.~Osher, and Z.~Shen.
\newblock Linearized bregman iterations for compressed sensing.
\newblock {\em Math. Comput.}, 78(267):1515--1536, 2009.

\bibitem{Candes2006Robust}
E.~J. Cand{\`e}s, J.~Romberg, and T.~Tao.
\newblock Robust uncertainty principles: exact signal reconstruction from
  highly incomplete frequency information.
\newblock {\em IEEE Trans. Inform. Theory}, 52(2):489--509, Feb 2006.

\bibitem{Candes2008Enhancing}
E.~J. Cand{\`e}s, M.~B. Wakin, and S.~P. Boyd.
\newblock Enhancing sparsity by reweighted $\ell_1$ minimization.
\newblock {\em J. Fourier Anal. Appl.}, 14(5):877--905, Dec 2008.

\bibitem{Chambolle2011first}
A.~Chambolle and T.~Pock.
\newblock A first-order primal-dual algorithm for convex problems with
  applications to imaging.
\newblock {\em J. Math. Imaging Vision}, 40(1):120--145, 2011.

\bibitem{Chartrand2008Restricted}
R.~Chartrand and V.~Staneva.
\newblock Restricted isometry properties and nonconvex compressive sensing.
\newblock {\em Inverse Problems}, 24(3):035020, 14, 2008.

\bibitem{Chartrand2008Iteratively}
R.~Chartrand and W.~Yin.
\newblock Iteratively reweighted algorithms for compressive sensing.
\newblock In {\em Proc. IEEE Int. Conf. Acoustics, Speech and Signal
  Processing}, pages 3869--3872, March 2008.

\bibitem{Chen2012Smoothing}
X.~Chen.
\newblock Smoothing methods for nonsmooth, nonconvex minimization.
\newblock {\em Math. Program.}, 134(1):71--99, 2012.

\bibitem{Chen2013Optimality}
X.~Chen, L.~Niu, and Y.~Yuan.
\newblock Optimality conditions and a smoothing trust region newton method for
  non{Lipschitz} optimization.
\newblock {\em SIAM J. Optim.}, 23(3):1528--1552, July 2013.

\bibitem{Chen2010Lower}
X.~Chen, F.~Xu, and Y.~Ye.
\newblock Lower bound theory of nonzero entries in solutions of
  $\ell_2$-$\ell_p$ minimization.
\newblock {\em SIAM J. Sci. Comput.}, 32(5):2832--2852, 2010.

\bibitem{Chen2014Convergence}
X.~Chen and W.~Zhou.
\newblock Convergence of the reweighted $\ell_1$ minimization algorithm for
  $\ell_2$-$\ell_p$ minimization.
\newblock {\em Comput. Optim. Appl.}, 59(1):47--61, Oct 2014.

\bibitem{Daubechies2004iterative}
I.~Daubechies, M.~Defrise, and C.~De~Mol.
\newblock An iterative thresholding algorithm for linear inverse problems with
  a sparsity constraint.
\newblock {\em Comm. Pure Appl. Math.}, 57(11):1413--1457, 2004.

\bibitem{Daubechies2010Iteratively}
I.~Daubechies, R.~A. Devore, M.~Fornasier, and C.~S. Güntürk.
\newblock Iteratively reweighted least squares minimization for sparse
  recovery.
\newblock {\em Comm. Pure Appl. Math.}, 63(1):1--38, 2010.

\bibitem{Dielman2005Least}
T.~E. Dielman.
\newblock Least absolute value regression: recent contributions.
\newblock {\em J. Stat. Comput. Simul.}, 75(4):263--286, 2005.

\bibitem{Donoho2006Compressed}
D.~L. Donoho.
\newblock Compressed sensing.
\newblock {\em IEEE Trans. Inform. Theory}, 52(4):1289--1306, 2006.

\bibitem{Foucart2009Sparsest}
S.~Foucart and M.-J. Lai.
\newblock Sparsest solutions of underdetermined linear systems via
  $\ell_q$-minimization for $0<q \leq 1$.
\newblock {\em Appl. Comput. Harmon. Anal.}, 26(3):395 -- 407, May 2009.

\bibitem{Goldstein2009Split}
T.~Goldstein and S.~Osher.
\newblock The split {Bregman} method for {L1}-regularized problems.
\newblock {\em SIAM J. Imaging Sci.}, 2(2):323--343, 2009.

\bibitem{Gorodnitsky1997Sparse}
I.~F. Gorodnitsky and B.~D. Rao.
\newblock Sparse signal reconstruction from limited data using {{FOCUSS}}: a
  re-weighted minimum norm algorithm.
\newblock {\em IEEE Trans. Signal Process.}, 45(3):600--616, Mar 1997.

\bibitem{He2012On}
B.~He and X.~Yuan.
\newblock On the {$O(1/n)$} convergence rate of the douglas�crachford
  alternating direction method.
\newblock {\em SIAM J. Numer. Anal.}, 50(2):700--709, 2012.

\bibitem{Huber1981Robust}
H.~P. J.
\newblock {\em Robust statistics}.
\newblock Wiley, New York, 1981.

\bibitem{Krishnan2009Fast}
D.~Krishnan and R.~Fergus.
\newblock Fast image deconvolution using hyper-laplacian priors.
\newblock In {\em Proc. 22nd Int. Conf. Neural Information Processing Systems},
  pages 1033--1041, 2009.

\bibitem{Kurdyka1998gradients}
K.~Kurdyka.
\newblock On gradients of functions definable in o-minimal structures.
\newblock {\em Ann. Inst. Fourier (Grenoble)}, 48(3):769--783, 1998.

\bibitem{Lai2011Unconstrained}
M.-J. Lai and J.~Wang.
\newblock An unconstrained $\ell_q$ minimization with $0 < q \leq 1 $ for
  sparse solution of underdetermined linear systems.
\newblock {\em SIAM J. Optim.}, 21(1):82--101, 2011.

\bibitem{Lai2013Improved}
M.-J. Lai, Y.~Xu, and W.~Yin.
\newblock Improved iteratively reweighted least squares for unconstrained
  smoothed $\ell_q$ minimization.
\newblock {\em SIAM J. Numer. Anal.}, 51(2):927--957, 2013.

\bibitem{Lojasiewicz1963Une}
S.~{\L}ojasiewicz.
\newblock Une propri\'et\'e topologique des sous-ensembles analytiques r\'eels.
\newblock In {\em Les \'Equations aux {D}\'eriv\'ees {P}artielles ({P}aris,
  1962)}, pages 87--89. \'Editions du Centre National de la Recherche
  Scientifique, Paris, 1963.

\bibitem{Lu2014Iterative}
Z.~Lu.
\newblock Iterative reweighted minimization methods for $\ell_p$ regularized
  unconstrained nonlinear programming.
\newblock {\em Math. Program.}, 147(1):277--307, Oct 2014.

\bibitem{Lv2009unified}
J.~Lv and Y.~Fan.
\newblock A unified approach to model selection and sparse recovery using
  regularized least squares.
\newblock {\em Ann. Statist.}, 37(6A):3498--3528, 2009.

\bibitem{Nataraj1995Sparse}
B.~K. Natarajan.
\newblock Sparse approximate solutions to linear systems.
\newblock {\em SIAM J. Comput.}, 24(2):227--234, 1995.

\bibitem{Paredes2011Compressive}
J.~L. Paredes and G.~R. Arce.
\newblock Compressive sensing signal reconstruction by weighted median
  regression estimates.
\newblock {\em IEEE Trans. Signal Process.}, 59(6):2585--2601, 2011.

\bibitem{Rockafellar1998Variational}
R.~T. Rockafellar and R.~J.-B. Wets.
\newblock {\em Variational Analysis}, volume 317 of {\em Grundlehren der
  Mathematischen Wissenschaften}.
\newblock Springer-Verlag Berlin Heidelberg, 1998.

\bibitem{Sun2012Recovery}
Q.~Sun.
\newblock Recovery of sparsest signals via $\ell_q$-minimization.
\newblock {\em Appl. Comput. Harmon. Anal.}, 32(3):329 -- 341, May 2012.

\bibitem{Tropp2010Computational}
J.~A. Tropp and S.~J. Wright.
\newblock Computational methods for sparse solution of linear inverse problems.
\newblock {\em Proc. IEEE}, 98(6):948--958, June 2010.

\bibitem{Wu2010Augmented}
C.~Wu and X.-C. Tai.
\newblock Augmented {Lagrangian} method, dual methods, and split {Bregman}
  iteration for {ROF}, vectorial {TV}, and high order models.
\newblock {\em SIAM J. Imaging Sci.}, 3(3):300--339, 2010.

\bibitem{Xu2012L12}
Z.~Xu, X.~Chang, F.~Xu, and H.~Zhang.
\newblock {$L_{1/2}$} regularization: A thresholding representation theory and
  a fast solver.
\newblock {\em IEEE Trans. Neural Netw. Learn. Syst.}, 23(7):1013--1027, Jul
  2012.

\bibitem{Yan2016Self}
M.~Yan and W.~Yin.
\newblock Self equivalence of the alternating direction method of multipliers.
\newblock In R.~Glowinski, S.~J. Osher, and W.~Yin, editors, {\em Splitting
  Methods in Communication, Imaging, Science, and Engineering. Scientific
  Computation}. Springer, Cham, 2016.

\bibitem{Zou2008One}
H.~Zou and R.~Li.
\newblock One-step sparse estimates in nonconcave penalized likelihood models.
\newblock {\em Ann. Statist.}, 36(4):1509--1533, Aug. 2008.

\bibitem{Zuo2013Generalized}
W.~Zuo, D.~Meng, L.~Zhang, X.~Feng, and D.~Zhang.
\newblock A generalized iterated shrinkage algorithm for non-convex sparse
  coding.
\newblock In {\em Proc. IEEE Int. Conf. Computer Vision}, pages 217--224, Dec
  2013.

\end{thebibliography}
\end{document}